\numberwithin{equation}{section}
\newcommand{\field}[1]{\mathbb{#1}}
\newcommand{\Z}{\field{Z}}
\newcommand{\R}{\field{R}}
\newcommand{\C}{\field{C}}
\newcommand{\N}{\field{N}}
 \def\cC{\mathscr{C}}
\def\cL{\mathscr{L}}
\def\cO{\mathscr{O}}
\def\mO{\mathcal{O}}
\def\bE{{\boldsymbol E}}
\newcommand{\be}{\begin{eqnarray}}
\newcommand{\ee}{\end{eqnarray}}
\newcommand{\ov}{\overline}
\newcommand{\wi}{\widetilde}
\newcommand{\var}{\varepsilon}
\newcommand{\bb}{\boldsymbol{b}}
\DeclareMathOperator{\End}{End}
\DeclareMathOperator{\Ker}{Ker}
\DeclareMathOperator{\Dom}{Dom}
\DeclareMathOperator{\rank}{rank}
\DeclareMathOperator{\Id}{Id}
\DeclareMathOperator{\supp}{supp}
\DeclareMathOperator{\inj}{inj}
\DeclareMathOperator{\spec}{Spec}
\newcommand{\spin}{$\text{spin}^c$ }
\newcommand{\norm}[1]{\lVert#1\rVert}
\newcommand{\om}{\omega}
\newcommand{\db}{\overline\partial}
\newtheorem{thm}{Theorem}[section]
\newtheorem{lemma}[thm]{Lemma}
\newtheorem{prop}[thm]{Proposition}
\newtheorem{cor}[thm]{Corollary}
\theoremstyle{definition}
\newtheorem{rem}[thm]{Remark}
\theoremstyle{definition}
\newtheorem{defn}[thm]{Definition}
\newcommand{\comment}[1]{}
\begin{document}

\title{Exponential Estimate for the asymptotics of Bergman kernels}
\date{\today}

\author{Xiaonan Ma}
\address{Institut Universitaire de France
\&Universit\'e Paris Diderot - Paris 7,
UFR de Math\'ematiques, Case 7012,
75205 Paris Cedex 13, France}
\email{ma@math.jussieu.fr}
\thanks{Partially supported by Institut Universitaire de France}
\author{George Marinescu}
\address{Universit{\"a}t zu K{\"o}ln,  
Mathematisches Institut, Weyertal 86-90, 50931 K{\"o}ln, Germany\\
    \& Institute of Mathematics `Simion Stoilow', Romanian Academy,
Bucharest, Romania}
\email{gmarines@math.uni-koeln.de}
\thanks{Partially supported by DFG funded projects SFB/TR 12 
and MA 2469/2-1}

\begin{abstract}
We prove an exponential estimate for 
the asymptotics of Bergman kernels of a positive line 
bundle under hypotheses of bounded geometry.
We give further Bergman kernel proofs of complex geometry results, 
such as separation of points, existence of local coordinates and 
holomorphic convexity by sections of positive line bundles.
\end{abstract}

\maketitle

\setcounter{section}{-1}
\section{Introduction} \label{s1}

Let $(X,\om)$ be symplectic manifold of real dimension
$2n$. Assume that there exists a Hermitian line bundle $(L,h^L)$ 
over $X$ endowed with a Hermitian connection $\nabla^L$ 
with the property that 
\begin{align}\label{eq:0.1}
R^L=   -2\pi\sqrt{-1}\, \omega,
\end{align}
 where
$R^L=(\nabla^L)^2$ is the curvature of $(L,\nabla^L)$. Let
$(E,h^E)$ be a Hermitian vector bundle  on $X$ with Hermitian
connection $\nabla^E$ and its curvature $R^E$.

Let $J$ be an almost complex structure which is
compatible with $\om$ (i.e., $\om$ is $J$-invariant and 
$\om(\cdot, J\cdot)$ defines a metric on $TX$).
Let $g^{TX}$ be a  $J$-invariant Riemannian metric on $X$.
Let $d(x,y)$ be the Riemannian distance on $(X,g^{TX})$.

The \spin Dirac operator $D_p$ acts on
$\Omega^{0,{\scriptscriptstyle{\bullet}}}(X,L^p\otimes E)
=\bigoplus_{q=0}^n\Omega^{0,q}(X,L^p\otimes E)$, 
the direct sum of
spaces of $(0,q)$--forms with values in $L^p\otimes E$.

We refer to the orthogonal projection $P_p$ from 
$L^2(X,E_p)$, the  space of $L^2$-sections of 
$E_p:=\Lambda^{\bullet}(T^{\ast\,(0,1)}X)\otimes L^p\otimes E$, 
onto $\Ker (D_p)$ as the {\emph{Bergman projection}} of $D_p$\,. 
The Schwartz kernel $P_p(\cdot,\cdot)$ of $P_p$
with respect to the Riemannian volume form $dv_X(x')$
of $(X,g^{TX})$
 is called the {\emph{Bergman kernel}} of $D_p$. 

\begin{thm}\label{t0.1}
Suppose that $(X, g^{TX})$ is complete  and 
$R^L, R^{E}$, $J$, $g^{TX}$
have bounded geometry {\rm(}i.e., they and their derivatives of 
any order are uniformly bounded on $X$ in the norm induced by 
$g^{TX}$, $h^{E}$, and the injectivity radius of 
$(X,g^{TX})$ is positive{\rm)}.
Assume also that there exists $\varepsilon>0$ such that on $X$,
\begin{align}\label{eq:0.6}
\sqrt{-1}R^L(\cdot, J\cdot)> \varepsilon g^{TX}(\cdot,\cdot)\,.
\end{align}
Then there exist $\boldsymbol{c} >0$, $\boldsymbol{p}_{0}>0$, 
which can be determined explicitly
from the geometric data {\rm(}cf.\ \eqref{bk3.30}{\rm)} such that  
for any $k\in \N$, there exists $C_k>0$ such that 
for any $p\geqslant \boldsymbol{p}_{0}$, $x,x'\in X$, we have 
\begin{align}\label{eq:0.7}
\left| P_p(x,x')\right|_{\cC^k} \leqslant C_k \, p^{n+\frac{k}{2}}
\, \exp\!\left(- \boldsymbol{c} \,\sqrt{p}\, d(x,x')\right).
\end{align}
\end{thm}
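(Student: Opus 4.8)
The plan is to deduce \eqref{eq:0.7} from three ingredients: a spectral gap for $D_p^2$ of size proportional to $p$, an exponentially weighted (Agmon/Combes--Thomas type) resolvent estimate that converts this gap into off-diagonal $L^2$-decay at the rate $\sqrt p$, and a Sobolev bootstrap adapted to the bounded geometry. I would first establish the gap. The Lichnerowicz--Bochner--Kodaira formula reads, schematically, $D_p^2=\Delta^{E_p}+p\,\mathbf{c}(R^L)+\mathcal O(1)$, where the curvature endomorphism $\mathbf{c}(R^L)$ is bounded below by a positive multiple of $\varepsilon$ on the degree-$0$ part by \eqref{eq:0.6}, and where the $\mathcal O(1)$ collects the $p$-independent zeroth-order terms, controlled by bounded geometry. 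Hence there are $C_1>0$, $C_2$, computable from $\varepsilon$ and the geometric bounds, with $\spec(D_p^2)\subset\{0\}\cup[C_1 p-C_2,\infty)$ and $\Ker D_p$ concentrated in degree $0$ for $p\ge\boldsymbol p_0$, so that $P_p=\frac{1}{2\pi i}\oint_{\Gamma}(\lambda-D_p^2)^{-1}\,d\lambda$ with $\Gamma$ encircling only $0$ at distance $\sim C_1 p$ from the rest of the spectrum. Rescaling the spectral parameter by $\sqrt p$ puts the gap at a fixed location, which is precisely why the natural decay rate is $\sqrt p$ rather than $p$.

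The heart of the matter is a weighted resolvent bound. Fix $x'\in X$ and a Lipschitz weight $\psi$ with $|d\psi|\le\gamma\sqrt p$ pointwise, approximating $\gamma\sqrt p\,d(\cdot,x')$. For $\lambda\in\Gamma$ I would conjugate, $e^{\psi}(\lambda-D_p^2)e^{-\psi}=(\lambda-D_p^2)-B_\psi$, where $B_\psi$ is first order with coefficients $\mathcal O(|d\psi|)$ and zeroth-order part $\mathcal O(|d\psi|^2+|\nabla d\psi|)$. Using $\|(\lambda-D_p^2)^{-1}\|=\mathcal O(1/p)$ and $\|D_p(\lambda-D_p^2)^{-1}\|=\mathcal O(1/\sqrt p)$ on $\Gamma$, together with $|d\psi|\le\gamma\sqrt p$, every term of $(\lambda-D_p^2)^{-1}B_\psi$ is $\mathcal O(\gamma)$; choosing $\gamma$ small (depending only on $C_1$ and the geometric bounds) makes the Neumann series converge, so $e^{\psi}(\lambda-D_p^2)^{-1}e^{-\psi}$ is bounded uniformly in $p$ and $\lambda\in\Gamma$. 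Integrating over $\Gamma$ gives $\|e^{\psi}P_p e^{-\psi}\|\le C$, and letting $\psi\uparrow\boldsymbol c\sqrt p\,d(\cdot,x')$ yields the off-diagonal control $\int_X e^{2\boldsymbol c\sqrt p\,d(y,x')}|P_p(y,x')\xi|^2\,dv_X(y)\le C|\xi|^2$ for $\xi$ in the fibre $(E_p)_{x'}$.

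It remains to upgrade this $L^2$ statement to the pointwise $\cC^k$ bound. Since $P_p(\cdot,x')\xi\in\Ker D_p$, elliptic regularity for $D_p$ on balls of radius comparable to the positive injectivity radius, phrased in the $p$-rescaled Sobolev norms in which one covariant derivative costs a factor $\sqrt p$, bounds the $\cC^k$-norm of $P_p(\cdot,x')$ on such a ball by $p^{k/2}$ times a local weighted $L^2$-norm. Feeding in the weighted estimate above, together with the diagonal normalisation $|P_p(x',x')|=\mathcal O(p^n)$ coming from $\tr P_p$, produces the factor $p^{n+k/2}$ and the weight $\exp(-\boldsymbol c\sqrt p\,d(x,x'))$, after paying a harmless constant $e^{\boldsymbol c\sqrt p\,\inj}$ to transfer the single-variable weight to the pair $(x,x')$ via the triangle inequality. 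Running the same argument in the second variable, using $P_p(x,x')^*=P_p(x',x)$, gives the full $\cC^k$ statement \eqref{eq:0.7}.

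I expect the main obstacle to be the weighted resolvent bound with constants uniform in $p$: one must track the precise $p$-powers in the commutator $B_\psi$ and in the resolvent norms so that $(\lambda-D_p^2)^{-1}B_\psi$ is genuinely $\mathcal O(\gamma)$ and not $\mathcal O(\gamma\sqrt p)$, and it is exactly here that the balance ``gap $\sim p$ versus weight slope $\sim\sqrt p$'' is used in an essential way; this balance also pins down the explicit admissible $\boldsymbol c$ and $\boldsymbol p_0$ required by the statement. By contrast, the more elementary route through finite propagation speed of $\cos(sD_p)$, while it localizes the kernel and fixes the gap at $\mathcal O(1)$ after rescaling, only delivers sub-exponential Gevrey decay $\exp(-\boldsymbol c(\sqrt p\,d)^{1/s_0})$ with $s_0>1$ by Paley--Wiener, which is weaker than \eqref{eq:0.7}; the analytic weighted estimate is what makes the exponent linear in $d$.
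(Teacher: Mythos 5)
Your proposal is correct in outline, but it takes a genuinely different route from the paper's proof. You obtain the off-diagonal decay from a Combes--Thomas/Agmon weighted resolvent estimate: conjugating the resolvent on a contour around $0$ by $e^{\psi}$ with $\lvert d\psi\rvert\leqslant\gamma\sqrt{p}$ and balancing the slope $\gamma\sqrt{p}$ of the weight against the size $\sim p$ of the spectral gap, so that the conjugation error is $\mathcal{O}(\gamma)$ uniformly in $p$. The paper never conjugates. It writes $P_p$ as the heat operator $\exp(-\tfrac{u}{p}D_p^2)$ minus the integral \eqref{bk3.20}, proves the Gaussian off-diagonal bound \eqref{bk3.5} for the heat kernel at time $u/p$ by splitting the Gaussian $e^{-v^2/2}$ into a compactly supported Fourier piece (handled by finite propagation speed, \eqref{bk3.11a}) plus a Gaussian-small tail \eqref{bk3.10}--\eqref{bk3.11}, and then extracts the linear exponential rate from the elementary inequality $\tfrac14\mu_0u+\tfrac{\boldsymbol{a}}{u}p\,d(x,x')^2\geqslant\sqrt{\boldsymbol{a}\mu_0p}\,d(x,x')$ when integrating the spectral-gap decay $e^{-\mu_0u_1/2}$ of \eqref{bk3.5a} over $u_1\in[u,\infty)$, cf.\ \eqref{bk3.22}. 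Both arguments rest on the same two inputs (the gap of Lemma \ref{t2.1} and the rescaled elliptic estimate of Lemma \ref{t2.2}, which is essentially your Sobolev bootstrap) and both yield explicit $\boldsymbol{c}$, $\boldsymbol{p}_0$. Your route is more direct in that it avoids the heat semigroup altogether; the paper's route has the advantage that the intermediate Gaussian estimate of Theorem \ref{bkt3.3} is reused verbatim to sum over the deck group in the proof of Theorem \ref{t4.15}. If you carry your plan out, two points need care: $B_\psi$ contains the unbounded first-order terms $D_pc(d\psi)+c(d\psi)D_p$, so the Neumann series must be organized as $(\lambda-D_p^2)^{-1}\sum_k\bigl(B_\psi(\lambda-D_p^2)^{-1}\bigr)^k$ with every stray $D_p$ absorbed by $\lVert D_p(\lambda-D_p^2)^{-1}\rVert=\mathcal{O}(p^{-1/2})$ (or, more cleanly, conjugate the first-order operator $\tfrac{1}{\sqrt{p}}D_p$, for which the error is just $\tfrac{1}{\sqrt p}c(d\psi)$); and $d(\cdot,x')$ is only Lipschitz, so the $\nabla d\psi$ term must be handled by regularization. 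Also, on a non-compact $X$ the diagonal bound $\lvert P_p(x',x')\rvert=\mathcal{O}(p^n)$ should come from Lemma \ref{t2.2} applied to $P_p$ itself, not from $\tr P_p$, which need not be finite.

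One assertion in your final paragraph is wrong and worth correcting: you claim the finite-propagation-speed route can only give sub-exponential Gevrey decay by Paley--Wiener. That is true if one tries to approximate the spectral projection directly by functions with compactly supported Fourier transform, but it is not what the paper does. Applying finite propagation speed to the heat semigroup --- whose Fourier transform is a Gaussian, hence with Gaussian-small rather than merely Gevrey-small tails --- and then optimizing over the time variable against the spectral gap recovers the exponent linear in $d(x,x')$. The paper's proof is precisely the route you dismiss, and it does yield \eqref{eq:0.7}.
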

The pointwise $\cC^k$-seminorm $\left|S(x,x')\right|_{\cC^k}$ 
of a section $S\in\cC^\infty(X\times X,E_p\boxtimes E_p^*)$ 
at a point $(x,x')\in X\times X$ is the sum of the norms
induced by $h^L$, $h^E$ and $g^{TX}$ of the derivatives 
up to order $k$ of $S$ with respect to
the connection induced by $\nabla^L$, $\nabla^E$
and the Levi-Civita connection $\nabla^{TX}$ evaluated at $(x,x')$.

Assume now $X=\C^n$ with the standard  trivial metric,
$E=\C$ with trivial metric.
Assume also $L=\C$ and $h^L=e^{-\varphi}$ where 
$\varphi:X\to\R$ is a smooth plurisubharmonic potential 
such that \eqref{eq:0.6} holds.
Then the estimate \eqref{eq:0.7}  with $k=0$ was basically 
obtained by  \cite{Christ91} for $n=1$, 
\cite{Delin98}, \cite{Lindholm01} for $n\geqslant 1$ (cf. also 
\cite{Berndtsson03}).
In \cite[Theorem 4.18]{DLM06} (cf. \cite[Theorem 4.2.9]{MM07}),
a refined version of \eqref{eq:0.7}, i.e., the asymptotic expansion
of $P_p(x,x')$ for $p\to +\infty$ 
with the exponential estimate was obtained.

When $(X,J,\om)$ is a compact K\"ahler manifold, 
$E=\C$ with trivial metric,
$g^{TX}= \om(\cdot,J\cdot)$ and (\ref{eq:0.1}) holds,
a better estimate than \eqref{eq:0.7}
with $k=0$ and $d(x,x')>\delta>0$ was obtained in \cite{Christ13}.

Recently,  Lu and Zelditch announced in  
\cite[Theorem 2.1]{LuZelditch13} 
the estimate  \eqref{eq:0.7}  with $k=0$ and $d(x,x')>\delta>0$
when $(X,J,\om)$ is a complete K\"ahler manifold, 
$E=\C$ with trivial metric,
$g^{TX}= \om(\cdot,J\cdot) $ and (\ref{eq:0.1}) holds.
It is surprising that \cite[Theorem 2.1]{LuZelditch13} requires 
no other condition on the curvatures (cf.\ Remarks \ref{t4.2}, \ref{r4.1}). 

Theorem \ref{t0.1} was known to the authors for several years, 
being an adaptation of \cite[Theorem 4.18]{DLM06}.
The recent papers \cite{Christ13,LuZelditch13} motivated us
to publish our proof.

The next result describes the relation between 
the Bergman kernel on a Galois covering
of a compact symplectic manifold and the Bergman kernel on
the base.
\begin{thm}\label{t4.15}
Let $(X,\om)$ be a compact symplectic manifold.
Let $(L,\nabla^L,h^L)$, $(E,\nabla^E,h^E)$, $J$, $g^{TX}$ 
be given as above.
Consider a Galois covering $\pi:\wi{X}\to X$ and let $\Gamma$ 
be the group of deck transformations. Let us decorate 
with tildes the preimages of objects
living on the quotient, e.\,g., $\wi{L}=\pi^*L$, 
$\wi{\om}=\pi^*\om$ etc. Let $\wi{D}_p$ be the 
\spin Dirac operator acting on
$\Omega^{0,{\scriptscriptstyle{\bullet}}}
(\wi{X},\wi{L}^p\otimes\wi{E})$ and
$\wi{P}_p$ be the orthogonal projection  from 
$L^2(\wi{X},\wi{E}_p)$ onto $\Ker(\wi{D}_p)$.
There exist $\boldsymbol{p}_1$
which depends only on the geometric data on $X$, such that
for any $p> \boldsymbol{p}_1$
we have
\begin{equation}\label{eq:5.18}
\sum_{\gamma\in\Gamma}\wi{P}_p(\gamma x,y)
=P_p(\pi(x),\pi(y))\,,\:\:\text{for any $x,y\in\wi{X}$}.
\end{equation}
\end{thm}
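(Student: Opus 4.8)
The plan is to realize both Bergman projections as values of the functional calculus of the respective Dirac operators, and then to reduce the claimed identity \eqref{eq:5.18} to the classical method of images for the wave operator, the convergence of the resulting sum over $\Gamma$ being supplied precisely by Theorem \ref{t0.1}. First I would check that $\wi X$, equipped with the pulled-back data, satisfies all the hypotheses of Theorem \ref{t0.1}: it is complete, being a covering of the compact manifold $X$; the tensors $R^{\wi L}$, $R^{\wi E}$, $\wi J$, $g^{T\wi X}$ and all their derivatives are uniformly bounded and the injectivity radius is positive, all of these being pullbacks of data on the compact $X$; and \eqref{eq:0.6} holds on $\wi X$, since it is the pullback of the corresponding inequality on $X$. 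Hence \eqref{eq:0.7} applies to $\wi P_p$ on $\wi X$, with the very same constants $\boldsymbol{c}$, $\boldsymbol{p}_0$ that occur on $X$.

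Next I would record the spectral gap: by the Bochner--Kodaira--Lichnerowicz formula together with \eqref{eq:0.6} and bounded geometry (see \cite{MM07}), there are constants $c_1>0$, $C_L>0$ depending only on the local geometric data such that, for $p$ large,
\[
\spec(D_p^2)\subset\{0\}\cup[c_1 p-C_L,+\infty),
\]
and the same inclusion holds for $\wi D_p^2$, this being a pointwise Bochner estimate and the local geometry of $\wi X$ and $X$ coinciding. Fix an even function $g\in C_c^\infty(\R)$ with $g\equiv 1$ near $0$ and $\supp g\subset(-c_1/2,c_1/2)$, and set $G_p(a):=g(a^2/p)$. Then $G_p\in C_c^\infty(\R)$ is even, $G_p(0)=1$, and $G_p$ vanishes on $\{\,|a|\geqslant\sqrt{c_1 p/2}\,\}$; for $p$ large the nonzero spectrum of $D_p$ (and of $\wi D_p$) lies in that region while the only spectral point inside is $0$, whence
\[
P_p=G_p(D_p),\qquad \wi P_p=G_p(\wi D_p).
\]

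The core step is the folding identity. Writing $G_p(D_p)=\tfrac{1}{2\pi}\int_\R\widehat{G_p}(t)\cos(tD_p)\,dt$, valid since $G_p$ is even, I would invoke finite propagation speed for the Dirac wave operator: the Schwartz kernel of $\cos(t\wi D_p)$ vanishes for $\wi d(x',y')>|t|$, where $\wi d$ is the Riemannian distance on $\wi X$. Thus for each fixed $t$ the sum $\sum_{\gamma\in\Gamma}\cos(t\wi D_p)(\gamma x,y)$ is locally finite and, by the method of images (both sides solve the wave equation $\partial_t^2 u=-D_p^2 u$ with the same Cauchy data at $t=0$, using that $\wi D_p=\pi^*D_p$ is $\Gamma$-invariant together with uniqueness of solutions), equals $\cos(tD_p)(\pi x,\pi y)$. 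Integrating this identity against $\widehat{G_p}(t)$ and interchanging sum and integral then yields
\[
P_p(\pi x,\pi y)=\sum_{\gamma\in\Gamma}G_p(\wi D_p)(\gamma x,y)=\sum_{\gamma\in\Gamma}\wi P_p(\gamma x,y),
\]
which is exactly \eqref{eq:5.18}.

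The main obstacle---and the place where Theorem \ref{t0.1} is indispensable---is the convergence of the series over $\Gamma$ and the legitimacy of the interchange above. Since $\Gamma$ acts freely, cocompactly and by isometries on $\wi X$, the \v{S}varc--Milnor lemma and bounded geometry give a uniform exponential bound $\#\{\gamma:\wi d(\gamma x_0,x_0)\leqslant R\}\leqslant C\,e^{\mu R}$ for the orbit-counting function, $\mu$ being the volume-growth rate of $\wi X$. Combining this with the estimate \eqref{eq:0.7} for $\wi P_p$, the series $\sum_{\gamma}|\wi P_p(\gamma x,y)|$ is dominated for fixed $x,y$ by $C\,p^{n}\sum_\gamma e^{-\boldsymbol{c}\sqrt{p}\,\wi d(\gamma x,y)}$, which converges as soon as $\boldsymbol{c}\sqrt{p}>\mu$; using the full $\cC^k$-estimate it even converges locally uniformly together with all derivatives, so $\sum_\gamma\wi P_p(\gamma x,y)$ is a smooth $\Gamma$-invariant kernel descending to $X\times X$. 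This fixes the threshold in the statement: one sets $\boldsymbol{p}_1:=\max\{\boldsymbol{p}_0,(\mu/\boldsymbol{c})^2\}$, which depends only on the geometric data of $X$ and the covering. I expect the only genuinely delicate point to be the treatment of the wave-kernel folding as an identity of distributions in $t$ paired against the Schwartz function $\widehat{G_p}$; this is handled because the finite-propagation support property makes every bounded $t$-slice a finite sum, while the rapid decay of $\widehat{G_p}$ and the exponential bound \eqref{eq:0.7} control the tail and legitimize the resummation.
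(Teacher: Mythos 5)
Your preparatory steps are sound and parallel the paper's: verifying that the pulled-back data on $\wi X$ satisfy the hypotheses of Theorem \ref{t0.1}, invoking Milnor's exponential bound $N(r)\leqslant Ce^{(2n-1)Kr}$ on the orbit-counting function, and using the resulting absolute convergence of $\sum_\gamma\wi P_p(\gamma x,y)$ to set $\boldsymbol{p}_1=\max\{\boldsymbol{p}_0,(\mu/\boldsymbol{c})^2\}$ all agree with what the paper does. The gap is in the folding step. You write $P_p=G_p(D_p)$ with $G_p$ smooth and \emph{compactly supported}, expand $G_p(D_p)=\tfrac{1}{2\pi}\int\widehat{G_p}(t)\cos(tD_p)\,dt$, fold each $t$-slice by the method of images, and then assert that the interchange of $\sum_\gamma$ with $\int dt$ is legitimized by the rapid decay of $\widehat{G_p}$ together with \eqref{eq:0.7}. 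But \eqref{eq:0.7} only controls the sum of the \emph{full} terms $\wi P_p(\gamma x,y)$; what the interchange actually requires is control of the sum over $\gamma$ of the \emph{Fourier tails} $\tfrac{1}{2\pi}\int_{|t|\geqslant d(\gamma x,y)}\widehat{G_p}(t)\cos(t\wi D_p)(\gamma x,y)\,dt$, i.e.\ the part of each term not already accounted for by the finite-propagation, finite-sum argument. By Lemma \ref{t2.2} such a tail is bounded by $Cp^{n}\int_{|t|\geqslant R}\big(|\widehat{G_p}(t)|+|\widehat{G_p^{(2m)}}(t)|\big)dt$ with $R=d(\gamma x,y)$, and since $G_p$ has compact support, Paley--Wiener forbids exponential decay of $\widehat{G_p}$: these tails decay only super-polynomially in $R$. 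Summed against the $\sim e^{(2n-1)KR}$ group elements at distance about $R$, the series diverges whenever $\Gamma$ has exponential growth, so the proposed domination fails. Nor can you repair this by choosing $G_p$ with exponentially decaying Fourier transform: such a function extends holomorphically to a strip and therefore cannot vanish on the nonzero spectrum while equalling $1$ at $0$, i.e.\ cannot satisfy $G_p(D_p)=P_p$ exactly.

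This is precisely why the paper folds the \emph{heat kernel} instead of a sharp spectral cutoff: $e^{-ua^2/p}$ has a Gaussian Fourier transform, so $\exp\!\big(\!-\tfrac{u}{p}D_p^2\big)(x,x')$ enjoys the Gaussian off-diagonal bound \eqref{bk3.5}, which beats the exponential counting function \eqref{eq:5.23} for every $u$ and $p$; the method-of-images identity \eqref{eq:5.21} is then an absolutely convergent sum. One passes from the heat kernel to $P_p$ by letting $u\to\infty$ through the Duhamel formula \eqref{bk3.20}, and the interchange of $\sum_\gamma$ with $\int_u^{+\infty}du_1$ in \eqref{eq:5.24} is justified by the extra factor $e^{-\mu_0 u_1/2}$ supplied by the spectral gap in \eqref{bk3.5a}. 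To complete your argument you would need to replace the compactly supported cutoff by this heat-kernel limiting procedure, or by some other functional calculus with at least Gaussian decay in the propagation variable.
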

The formula (\ref{eq:5.18}) in the compact K\"ahler situation 
and for $E=\C$ also appeared 
in \cite{LuZelditch13} by a different method.

We refer to \cite{Ma10,MM11} for further applications of the 
off-diagonal expansion of the Bergman kernel to 
the Berezin-Toeplitz quantization.

This paper is organized as follows: In Section \ref{s2}, we explain 
the spectral gap property of Dirac operators
and the elliptic estimate. In Section \ref{s3}, we establish 
Theorems \ref{t0.1}, \ref{t4.15}. In Section \ref{s4}, 
we show what the result becomes in the complex case. 
We give some applications of Theorem \ref{t0.1}
and of the diagonal expansion of the Bergman kernel.

\section{Dirac operator and elliptic estimates}\label{s2}

The almost complex structure $J$ induces a splitting
$T X\otimes_\R \C=T^{(1,0)}X\oplus T^{(0,1)}X$,
where $T^{(1,0)}X$ and $T^{(0,1)}X$
are the eigenbundles of $J$ corresponding to the eigenvalues 
$\sqrt{-1}$
and $-\sqrt{-1}$ respectively. Let $T^{*(1,0)}X$  and $T^{*(0,1)}X$
be the corresponding dual bundles.
For any $v\in TX$ with decomposition $v=v_{1,0}+v_{0,1}
\in T^{(1,0)}X\oplus T^{(0,1)}X$,  let 
${ v^\ast_{1,0}}\in T^{*(0,1)}X$ be the metric
dual of $v_{1,0}$. Then $c(v)=\sqrt{2}({ v^\ast_{1,0}}\wedge-
i_{v_{\,0,1}})$ defines the Clifford action of $v$ on
$\Lambda (T^{*(0,1)}X)$, where $\wedge$ and $i$
denote the exterior and interior product respectively. We denote
\begin{equation}\label{main2.1a}
    \Lambda^{0,\scriptscriptstyle\bullet}=\Lambda^{\bullet}
(T^{\ast\,(0,1)}X),\quad
E_p:=\Lambda^{0,\scriptscriptstyle{\bullet}}\otimes L^p\otimes E.
\end{equation}
Along the fibers of $E_p$,
we consider the pointwise Hermitian product 
$\langle\cdot,\cdot\rangle$  
induced by $g^{TX}$, $h^L$ and $h^E$. 
Let $dv_X$ be the Riemannian volume form of $(TX, g^{TX})$.
The $L^2$--Hermitian product on the space 
$\Omega^{0,\bullet}_0(X,L^p\otimes{E})$ 
of smooth compactly supported sections of $E_p$ is given by 
\begin{equation}\label{l2}
\langle s_1,s_2\rangle=\int_X\langle s_1(x),s_2(x)\rangle\,
dv_X(x)\,.
\end{equation}
We denote the corresponding norm with $\norm{\cdot}_{L^2}$ and 
with $L^2(X,E_p)$ the completion of 
$\Omega^{0,\scriptscriptstyle{\bullet}}_{0}(X,L^p\otimes E)$ 
with respect to this norm,
and $\|B\|^{0,0}$ the norm of
$B\in \cL(L^2(X, E_{p}))$ with respect to $\norm{\cdot}_{L^2}$.

Let $\nabla^{TX}$ be the Levi-Civita connection
of the metric $g^{TX}$, and let $\nabla^{\det_{1}}$ be the
connection on  $\det(T^{(1,0)}X)$ induced by $\nabla^{TX}$
by projection.
By \cite[\S 1.3.1]{MM07}, 
$\nabla^{TX}$ (and  $\nabla^{\det_{1}}$) induces canonically
a Clifford connection $\nabla^{\text{Cliff}}$ on 
$\Lambda (T^{*(0,1)}X)$.
Let $\nabla^{E_p}$ be the connection on
$E_p=\Lambda (T^{*(0,1)}X)\otimes L^p\otimes E$
induced by $\nabla^{\text{Cliff}}$, $\nabla^L$ and  $\nabla^E$.

We denote by $\spec(B)$ the spectrum of an operator $B$.

\begin{defn}\label{Dirac}
The \spin Dirac operator $D_p$ is defined by
\begin{equation}\label{defDirac}
D_p=\sum_{j=1}^{2n}c(e_j)\nabla^{E_p}_{e_j}:
\Omega^{0,\scriptscriptstyle{\bullet}}(X,L^p\otimes E)
\longrightarrow
\Omega^{0,\scriptscriptstyle{\bullet}}(X,L^p\otimes E)\,,
\end{equation}
with $\{e_i\}_i$  an orthonormal basis of $TX$.
\end{defn}
\noindent
Note that $D_p$ is a formally self--adjoint, first order 
elliptic differential operator.
Since we are working on a complete manifold $(X,g^{TX})$, 
$D_p$ is essentially self--adjoint.
This follows e.\,g.,\ from an easy modification of the 
Andreotti-Vesentini lemma \cite[Lemma 3.3.1]{MM07} 
(where the particular case of a complex manifold and 
the Dirac operator \eqref{eq4.1} is considered).  
Let us denote by $D_p$ the self-adjoint extension of $D_p$ 
defined initially on the space of smooth compactly supported forms, 
and by $\Dom(D_p)$ its domain.
By the proof of \cite[Theorems 1.1, 2.5]{MM02}
(cf. \cite[Theorems 1.5.7, 1.5.8]{MM07}), we have :

\begin{lemma}\label{t2.1}
    If $(X,g^{TX})$ is complete and $\nabla^{TX} J$, $R^{TX}$
    and $R^{E}$ are uniformly bounded on $(X,g^{TX})$,
    and (\ref{eq:0.6}) holds, then 
there exists $C_L>0$ such that for any $p\in \N$, 
and any 
$s\in\bigoplus_{q\geqslant 1}\Omega_0^{0,q}(X,L^p\otimes E)$,
we have 
\begin{equation}\label{main1}
\norm{D_{p}s}^2_{L^ 2}\geqslant\big(2p\mu_0-C_L\big)
\norm{s}^2_{L^ 2}\,,
\end{equation}
 with
\begin{equation}\label{b1}
\mu_0=\displaystyle\inf_{{u\in T_x^{(1,0)}X,\,x\in X}} 
R^L_x(u,\overline{u})\big\slash|u|^2_{g^{TX}} >0.
\end{equation}
Moreover  
\begin{align} \label{main1.1}
{\spec} (D^2_p )\subset \lbrace0\rbrace\cup 
[2p\mu_0 -C_L,+\infty[\,.
\end{align}
\end{lemma}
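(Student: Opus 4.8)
The plan is to read off both assertions from a Bochner--Lichnerowicz identity for $D_p^2$ in which the term of order $p$ is manifestly positive on forms of positive degree, and then to promote the resulting lower bound to the spectral gap \eqref{main1.1} by a parity argument.

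First I would record the Lichnerowicz formula for the Clifford--connection Dirac operator of Definition \ref{Dirac}. Writing $\Delta^{E_p}=(\nabla^{E_p})^*\nabla^{E_p}$ for the nonnegative connection Laplacian and $c(R^{E_p})=\sum_{j<k}R^{E_p}(e_j,e_k)\,c(e_j)c(e_k)$ for the Clifford action of the twisting curvature, one has
\[
D_p^2=\Delta^{E_p}+\tfrac14\,r^X+c(R^{E_p}),
\]
where $r^X$ is the scalar curvature of $g^{TX}$. Since $E_p=\Lambda(T^{*(0,1)}X)\otimes L^p\otimes E$, the twisting curvature decomposes as $R^{E_p}=p\,R^L+R^E+R_0$, where $R_0$ is the endomorphism part built from $R^{TX}$, the induced connection on $\det(T^{(1,0)}X)$, and $\nabla^{TX}J$; it is precisely here that the boundedness of $\nabla^{TX}J$ and $R^{TX}$ enters.

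The heart of the matter is the term $p\,c(R^L)$. Let $a_1(x)\leqslant\cdots\leqslant a_n(x)$ be the eigenvalues of the Hermitian form $R^L_x(\cdot,\overline{\,\cdot\,})$ on $T^{(1,0)}_xX$; by \eqref{eq:0.6} and \eqref{b1} they satisfy $a_j\geqslant\mu_0>0$. The standard computation of the curvature endomorphism on $\Lambda^{0,\scriptscriptstyle\bullet}$ (equivalently, the spectrum of the associated model harmonic oscillator, as in Bismut--Vasserot and the proof of \cite[Thms 1.5.7, 1.5.8]{MM07}) shows that $\Delta^{E_p}+p\,c(R^L)$ is nonnegative, vanishing asymptotically only on degree $0$, and is bounded below by $2p\mu_0$ on $\bigoplus_{q\geqslant1}\Omega_0^{0,q}(X,L^p\otimes E)$. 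Estimating the remaining terms $\tfrac14 r^X+c(R^E)+c(R_0)$ uniformly by a constant $C_L$ --- this is where the full bounded-geometry hypothesis is used, to guarantee that $C_L$ can be chosen independent of $p$ and uniform over $X$ --- then yields \eqref{main1}.

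It remains to deduce \eqref{main1.1}. Here I would use that $D_p$ is odd for the $\Z_2$-grading $\Omega^{0,\mathrm{even}}\oplus\Omega^{0,\mathrm{odd}}$, so that $D_p^2$ preserves parity and commutes with $D_p$. Let $s$ be an eigensection, $D_p^2 s=\lambda s$ with $\lambda\neq0$; its even and odd parts are again eigensections, so I may assume $s$ is homogeneous. If $s$ is odd, then $s\in\bigoplus_{q\geqslant1}\Omega^{0,q}$ and \eqref{main1} applied to $s$ gives $\lambda=\norm{D_p s}_{L^2}^2/\norm{s}_{L^2}^2\geqslant 2p\mu_0-C_L$. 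If $s$ is even, then $t:=D_p s$ is odd, hence lies in $\bigoplus_{q\geqslant1}\Omega^{0,q}$, and is a nonzero eigensection for the same $\lambda$; using $\norm{D_p t}_{L^2}^2=\norm{D_p^2 s}_{L^2}^2=\lambda^2\norm{s}_{L^2}^2$ and $\norm{t}_{L^2}^2=\langle D_p^2 s,s\rangle=\lambda\norm{s}_{L^2}^2$, \eqref{main1} applied to $t$ gives $\lambda^2\norm{s}_{L^2}^2\geqslant(2p\mu_0-C_L)\lambda\norm{s}_{L^2}^2$, whence again $\lambda\geqslant 2p\mu_0-C_L$. This establishes \eqref{main1.1}. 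The main obstacle is the curvature computation on positive-degree forms: one must carry along the torsion contributions coming from $\nabla^{TX}J$ in the almost-complex setting, verify the sharp leading constant $2\mu_0$, and --- crucially --- check that the remainder is bounded uniformly in $p$ and over all of $X$, which is exactly what the bounded-geometry hypotheses supply.
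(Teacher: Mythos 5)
Your argument is, in substance, the one the paper relies on: the paper gives no proof of Lemma \ref{t2.1} but cites the proofs of \cite[Theorems 1.1, 2.5]{MM02} (cf.\ \cite[Theorems 1.5.7, 1.5.8]{MM07}), and those proofs are exactly the Lichnerowicz formula with the twisting curvature split as $p\,R^L+O(1)$, the positivity of $\Delta^{E_p}+p\,c(R^L)$ on positive-degree forms (obtained by combining $c(R^L)\geqslant 2q\mu_0-\operatorname{tr}R^L$ in degree $q$ with $\Delta^{E_p}\geqslant p\operatorname{tr}R^L-C$, which you correctly package as a joint bound rather than a bound on $c(R^L)$ alone), and a parity argument to transfer the bound from odd to even forms. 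The one step that does not quite go through as written is the deduction of \eqref{main1.1}: on a complete non-compact manifold $D_p^2$ need not have point spectrum above the gap, so you cannot assume that $\lambda\in\spec(D_p^2)\setminus\{0\}$ admits an eigensection $s$, nor that $s$ (or $D_ps$) is smooth with compact support so that \eqref{main1} applies to it. The standard repair is to note that \eqref{main1} extends from $\Omega_0^{0,\bullet}$ to the form domain by essential self-adjointness (the Andreotti--Vesentini argument the paper invokes just before the Lemma), and then to use the general fact that for the closed odd operators $D_p^{\pm}$ interchanging even and odd forms one has $\spec\big(D_p^{-}D_p^{+}\big)\setminus\{0\}=\spec\big(D_p^{+}D_p^{-}\big)\setminus\{0\}$, so the nonzero spectrum of $D_p^2$ on even forms coincides with its nonzero spectrum on odd forms, where the quadratic-form bound \eqref{main1} gives the gap; alternatively one runs your computation on a Weyl (approximate eigen-) sequence. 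This is a fixable but genuine omission; the rest of the proposal matches the cited proof.
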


From now on, we assume that $(X,g^{TX})$ is complete and 
$R^L, R^{E}$,  $J$, $g^{TX}$ have bounded geometry.

The following elliptic estimate will be applied to get 
the kernel estimates.
\begin{lemma}\label{t2.2} Given a sequence of smooth forms 
    $s_p\in\bigcap_{\ell\in\N}\Dom(D_p^\ell)\subset L^2(X,E_p)$ 
    and a sequence $C_p>0$ {\rm(}$p\in\N${\rm)}, 
    assume that for any $\ell\in\N$, there exists $C_\ell'>0$ 
 such that for any $p\in\N^*$, 
 \begin{align}\label{main2.1}
\left\|  \left(\frac{1}{\sqrt{p}}\,D_{p}\right)^{\!\ell} s_p 
\right\|_{L^2}  \leqslant C_\ell' \, C_p\,.
\end{align}
Then  for any $k\in \N$, there exists $C_k>0$ such that 
for any $p\in \N^*$ and $x\in X$ 
the pointwise $\cC^k$-seminorm satisfies
 \begin{align}\label{main2.2}
\big|s_p\big|_{\cC^k}(x) \leqslant 
C_k \, C_p\, p^{\frac{n+k}{2}}.
\end{align}
\end{lemma}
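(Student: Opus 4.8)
The plan is to establish this purely local, pointwise bound by combining a rescaling at the natural length scale $1/\sqrt p$ with uniform interior elliptic estimates and Sobolev embedding, the only real work being to keep track of the powers of $p$ produced by the rescaling. Since the asserted bound is pointwise, I would fix $x_0\in X$ and work in a geodesic ball $B(x_0,r_0)$ with $r_0<\inj^X$, trivializing $L$, $E$ and $\Lambda^{0,\bullet}$ by parallel transport along radial geodesics so as to identify a neighbourhood of $x_0$ with a ball in $T_{x_0}X\cong\R^{2n}$ and each fibre of $E_p$ with a fixed Hermitian space. I would then introduce the dilation $Z\mapsto Z/\sqrt p$, set $\widetilde s_p(Z):=s_p(Z/\sqrt p)$, and define the rescaled operator $\mD_p$ by the intertwining relation $\mD_p\widetilde f=\widetilde{\bigl(\tfrac{1}{\sqrt p}D_p\bigr)f}$.

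The key structural point — and the step I expect to be the main obstacle — is that $\mD_p$ is a uniformly elliptic first-order operator whose coefficients are bounded in every $\cC^m$-norm uniformly in $p\geqslant 1$ and in the base point $x_0$. In the chosen trivialization the connection $1$-forms of $L$, $E$ and $\Lambda^{0,\bullet}$ vanish at the origin and are $O(\abs{Z})$ nearby, so the factor $p$ carried by $L^p$ is compensated exactly by the dilation: one finds $\mD_p=\sum_j c(e_j)\bigl(\partial_{Z_j}+O(\abs{Z})+O(\abs{Z}/p)\bigr)$, with principal symbol the (uniformly elliptic) Clifford symbol. Proving the uniformity of these coefficient bounds is precisely where the bounded-geometry hypothesis and the positivity of the injectivity radius enter, and it is what guarantees that every constant produced below is independent of $p$ and of $x_0$.

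Granting this, the remaining steps are routine. The uniform interior elliptic estimate for $\mD_p$, iterated along a finite chain of shrinking balls, yields $\norm{\widetilde s_p}_{H^\ell(B(0,1))}\leqslant C\sum_{j=0}^{\ell}\norm{\mD_p^{\,j}\widetilde s_p}_{L^2(B(0,2))}$ with $C$ independent of $p$ and $x_0$. By the intertwining relation and the change of variables $Z\mapsto Z/\sqrt p$, whose Jacobian is $p^{-n}$ and under which the Riemannian volume form stays comparable to Lebesgue measure, $\norm{\mD_p^{\,j}\widetilde s_p}_{L^2}=p^{n/2}\norm{(\tfrac{1}{\sqrt p}D_p)^{j}s_p}_{L^2}\leqslant C\,p^{n/2}C_j'\,C_p$ by the hypothesis \eqref{main2.1}. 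Choosing $\ell=k+n+1$ and invoking the Sobolev embedding $H^{k+n+1}(B(0,1))\hookrightarrow\cC^k(B(0,1))$ gives $\norm{\widetilde s_p}_{\cC^k}\leqslant C\,p^{n/2}C_p$. Finally, a derivative of $s_p$ at $x_0$ equals $\sqrt p$ times the corresponding derivative of $\widetilde s_p$, so $\abs{s_p}_{\cC^k}(x_0)\leqslant C\,p^{k/2}\norm{\widetilde s_p}_{\cC^k}\leqslant C_k\,C_p\,p^{(n+k)/2}$, which is \eqref{main2.2}.

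One small point remains: the ball $B(0,2)$ in rescaled coordinates corresponds to $B(x_0,2/\sqrt p)$, which lies within the injectivity radius only once $p$ exceeds a fixed threshold. The finitely many smaller values of $p$ carry no asymptotics and are dealt with by an ordinary (possibly $p$-dependent) elliptic estimate, the resulting constants being absorbed into $C_k$.
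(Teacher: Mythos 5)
Your proposal is correct and follows essentially the same route as the paper: rescaling by $1/\sqrt{p}$ in normal coordinates with parallel-transport trivializations, observing that bounded geometry makes the rescaled Dirac operator uniformly elliptic with $p$- and $x_0$-independent coefficient bounds, then applying elliptic estimates, Sobolev embedding, and the $p^{n/2}$ (Jacobian) and $p^{k/2}$ (derivative rescaling) bookkeeping. The only cosmetic difference is that the paper localizes by multiplying with a cutoff at scale $\varepsilon/\sqrt{p}$ and controlling the iterated commutators $[D_p,\dots,[D_p,f]\dots]$, whereas you use interior estimates on nested balls; these are interchangeable.
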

\begin{proof} Let $\inj^X$ be the injectivity radius of 
    $(X, g^{TX})$, and let $\var\!\in\,]0, \inj^X[$\,.
We denote by $B^{X}(x,{\varepsilon})$ and  
$B^{T_xX}(0, \varepsilon)$ the open balls
in $X$ and $T_x X$ with center $x\in X$ and radius 
$ \varepsilon$, respectively.
The exponential map $ T_x X\ni Z \mapsto \exp^X_x(Z)\in X$
is a diffeomorphism from $B^{T_xX} (0, \varepsilon)$  
on $B^{X} (x, \varepsilon)$.  
From now on, we identify $B^{T_xX}(0, \varepsilon)$ with 
$B^{X}(x, \varepsilon)$ for $\varepsilon < \inj^X$.

For $x_0\in X$, we work in the normal coordinates on
$B^{X} (x_0, \varepsilon)$.
 We identify $E_Z$, $L_Z$, $\Lambda (T^{*(0,1)}_Z X)$
 to  $E_{x_0}$, $L_{x_0}$, $\Lambda (T^{*(0,1)}_{x_0}X)$
 by parallel transport with respect
to the connections $\nabla ^E$, $\nabla ^L$,
$\nabla ^{\text{Cliff}}$ along the curve
$[0,1]\ni u \mapsto uZ$. Thus on $B^{T_{x_0}X}(0, \varepsilon)$,
$(E_{p}, h^{E_{p}})$ is identified to the trivial Hermitian bundle 
$(E_{p,x_{0}}, h^{E_{p, x_{0}}})$.
 Let $\{e_i\}_i$
be an orthonormal basis of $T_{x_0}X$.
Denote by  $\nabla_U$  the ordinary differentiation
 operator on $T_{x_0}X$ in the direction $U$.
Let $\Gamma^E, \Gamma^L, \Gamma^{\Lambda ^{0,\bullet}}$
be the corresponding connection forms of $\nabla^E$, $\nabla^L$
and $\nabla^{\text{Cliff}}$ with respect to any fixed frame for
$E,L$, $\Lambda (T^{*(0,1)}X)$ which is parallel along the curve
$[0,1]\ni u \mapsto uZ$
under the trivialization on $B^{T_{x_0}X}(0, \varepsilon)$.

Let $\{\wi{e}_i\}_i$ be an orthonormal frame on $TX$.
On $B^{T_{x_0}X}(0, \varepsilon)$, we have
\begin{align}\label{lm4.14}
\begin{split}
& \nabla^{E_{p,x_0}}_{\textstyle\cdot}= \nabla_{\textstyle\cdot} 
+p\, \Gamma ^L(\cdot)
+\Gamma ^{\Lambda ^{0,\bullet}}(\cdot)+ \Gamma ^E(\cdot),\\
&D_p = c(\wi{e}_j) \Big(\nabla_{\wi{e}_j} +p \Gamma ^L(\wi{e}_j)
+\Gamma ^{\Lambda ^{0,\bullet}}(\wi{e}_j)
+ \Gamma ^E(\wi{e}_j) \Big ).
\end{split}
\end{align}
By  \cite[Lemma 1.2.4]{MM07},  for $\Gamma^{\bullet} = 
\Gamma^E, \Gamma^L, \Gamma^{\Lambda ^{0,\bullet}}$, we have
\be\label{lm01.18}
\Gamma^{\bullet}_{Z}(e_j) = \frac{1}{2} R^{\bullet}_{x_0}(Z, e_j) 
+ \cO(|Z|^2).
\ee
Using an unit vector $S_L$ of $L_{x_0}$, we get an isometry 
$E_{p,x_0}\simeq (\Lambda (T^{*(0,1)}X)\otimes E)_{x_0}
=: \bE_{x_0}\,.$
\comment{As the operator $L_{p,x_0}$ takes values in
$\End(E_{p,x_0})= \End(\bE)_{x_0}$ 
(using the natural identification
$\End(L^p)\simeq \C$, which does not depend on $S_L$),
thus our formulas  do not depend the choice of $S_L$.
Now, under this identification, we will consider $L_{p,x_0}$
acting on $\cC^\infty (X_0,\bE_{x_0})$.
}
For  $s \in \cC^{\infty}(\R^{2n}, \bE_{x_0})$, $Z\in \R^{2n}$ and
$t=\frac{1}{\sqrt{p}}$\,, set\index{$L^t_2$}
\begin{align}\label{lm4.28}
\begin{split}
&(S_{t} s ) (Z) =s (Z/t), \quad
\nabla_{t}=  S_t^{-1}
t \nabla ^{E_{p,x_0}} S_t,\\
&\boldsymbol{D}_t =  S_t^{-1} t D_p  S_t\,.
\end{split}
\end{align}
Let $f : \R \to [0,1]$ be a smooth even function such that
 \begin{align} \label{lm4.19}
f(v) = \left \{ \begin{array}{ll}  1 \quad {\rm for}
\quad |v| \leqslant  \var/2, \\
  0 \quad {\rm for} \quad |v| \geqslant  \var.
\end{array}\right.
\end{align}
Set 
\begin{align}\label{lm4.19a}
\sigma_p (Z)= s_{p} (Z) f(|Z|/t), \quad  \wi{\sigma}_p = 
S_{t}^{-1}\sigma_p \in  \cC^{\infty}_{0}(\R^{2n}, \bE_{x_0}).
\end{align}
Then by (\ref{lm4.28}) and (\ref{lm4.19a}), we have
\begin{equation}\label{lm4.20a}\begin{split}
  &\wi{\sigma}_p(Z) = s_{p} ( t \,Z) f(|Z|)\,,\\
 & (\boldsymbol{D}_t ^l  \wi{\sigma}_p)(Z)= \sum_{j=0}^{l}
 \begin{pmatrix} l \\ j \end{pmatrix}
\underbrace{\big[D_{p},\dots,[D_{p}}_{\text{$(l-j)$ times}}, 
f(|\cdot|)]\ldots\big](Z)  \, 
(\boldsymbol{D}_t ^j S_{t}^{-1}s_{p})(Z)\,.
\end{split}\end{equation}
By \eqref{lm4.14}, $\big[D_{p},\ldots,[D_{p}, f(|\cdot|)]\ldots\big]$
is uniformly bounded on $B^{T_{x_0}X}(0, \varepsilon)$, 
with respect to $x_{0}\in X$, $p\in \N$, and 
$(\boldsymbol{D}_t ^j S_{t}^{-1}s_{p})(Z)
= \left(\Big(\frac{1}{\sqrt{p}}D_{p}\Big)^{j} s_p \right)(tZ)$.
Thus (\ref{main2.1}), (\ref{lm4.14}) and (\ref{lm4.20a}) 
imply
\begin{equation}\label{lm4.21a}
\begin{split}
\int\limits_{|Z|<\var} | \boldsymbol{D}_t ^l  \wi{\sigma}_p(Z)|^2\, dZ 
&\leqslant C_{0} p^n \int\limits_{|Z|<\var/\sqrt{p}}
\sum_{j=0}^{l}\left|\Big(\frac{1}{\sqrt{p}}D_{p}\Big)^{j} s_p (Z)
\right|^2\, dZ\\
&\leqslant  C_{1} \sum_{j=0}^{l}(C_j'C_p)^2 p^{n}.
\end{split}
\end{equation}
Now by (\ref{lm4.14}), (\ref{lm01.18}) and  (\ref{lm4.28}), we get
\begin{align}\label{lm4.18a}
\begin{split}
&\nabla_{t}= \nabla_{0}+ \mO(t), \quad  
\boldsymbol{D}_t = \boldsymbol{D}_{0}+ \mO(t),
\end{split}
\end{align}
where $\boldsymbol{D}_{0}$ is an elliptic operator on $\R^{2n}$.
On $\R^{2n}$, we use the usual $\cC^k$-seminorm,
i.\,e., $|f|_{\cC^k}(x):= \sum_{l\leqslant k} 
|\nabla_{e_{i_{1}}}\cdots \nabla_{e_{i_{l}}} f|(x)$.
By the Sobolev embedding theorem (cf.\ \cite[\!Theorem A.1.6]{MM07}),
(\ref{lm4.21a}), (\ref{lm4.18a}) 
and our assumption on bounded geometry, we obtain that
for any $k\in \N$, there exists $C_{k}>0$ such that 
for any $x_{0}\in X$, $p\in \N^*$, we have
\begin{align}\label{lm4.22a}
 \big| \wi{\sigma}_p\big|^2_{\cC^k} (0)   
 \leqslant  C_k\, C_p\, p^{n/2}\,.
\end{align}
Going back to the original coordinates (before rescaling), we get
\begin{align}\label{lm4.23a}
 \big| \sigma_p\big|^2_{\cC^k} (x_{0})\leqslant  C_k \, C_p\,
 p^{\frac{n+k}{2}}\,.
\end{align}
The proof of Lemma \ref{t2.2} is completed.
\end{proof}

\section{Proofs of Theorems \ref{t0.1} and \ref{t4.15}}\label{s3}

For $x,x^\prime\in X$ let 
$\exp\!\Big(\!-\frac{u}{p}D_{p}^{2}\Big)(x,x^\prime)$, 
$\left(\frac{1}{p}D_{p}^{2}\exp\!\Big(\!-\frac{u}{p}D_{p}^{2}\Big)
\right)(x,x^\prime)$  be the smooth kernels 
of the operators $\exp\!\Big(\!-\frac{u}{p}D_{p}^{2}\Big)$, 
$\frac{1}{p}D_{p}^{2}\exp\!\Big(\!-\frac{u}{p}D_{p}^{2}\Big)$
with respect to $dv_{X}(x^\prime)$.

\begin{thm}\label{bkt3.3} There exists $\boldsymbol{a} >0$ 
 such that for any $m\in \N$, $u_0>0$, there exists $C>0$ 
 such that for $u\geqslant u_0$, $p\in\N^*$,
 $x,x^\prime\in X$, we have 
 \begin{align}\label{bk3.5}
\begin{split}
&\Big |\exp\!\Big(\!-\frac{u}{p}D_{p}^{2}\Big)(x,x^\prime)
\Big |_{\cC^{m}}   \leqslant C p^{n+\frac{m}{2}}
\exp\!\Big(\mu_0 u- \frac{\boldsymbol{a} p}{u} d(x,x')^2\Big),
\end{split}
\end{align}
and that for  $u\geqslant u_0$, $p\geqslant 2 C_{L}/\mu_{0}$, 
$x,x^\prime\in X$, we have 
 \begin{align}\label{bk3.5a}
\begin{split}
&\Big |\left(\frac{1}{p}\,D_{p}^{2}
\exp\!\Big(\!-\frac{u}{p}D_{p}^{2}\Big)\right)(x,x^\prime)
\Big |_{\cC^{m}}   \leqslant C p^{n+\frac{m}{2}}
\exp\!\Big( -\frac{1}{2}\mu_0 u
- \frac{\boldsymbol{a} p}{u} d(x,x')^2\Big). 
\end{split}
\end{align}
\end{thm}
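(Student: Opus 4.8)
The plan is to realize both kernels through the functional calculus of the rescaled operator $\tfrac{1}{\sqrt p}D_p$ and to separate two independent mechanisms: finite propagation speed, which produces the Gaussian decay in $d(x,x')$, and the elliptic estimate of Lemma \ref{t2.2} together with the spectral gap of Lemma \ref{t2.1}, which produce the power $p^{n+m/2}$ and the exponential factors in $u$. Writing $t=1/\sqrt p$, note that $\tfrac1p D_p^2=(tD_p)^2$, so that $\exp(-\tfrac up D_p^2)=\phi_u(tD_p)$ and $\tfrac1p D_p^2\exp(-\tfrac up D_p^2)=\Phi_u(tD_p)$ with the even functions $\phi_u(a)=e^{-ua^2}$ and $\Phi_u(a)=a^2e^{-ua^2}$.

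First I would invoke finite propagation speed for the first--order self--adjoint operator $D_p$: the Schwartz kernel of $\cos(sD_p)$ vanishes on $\{(x,x'):d(x,x')>|s|\}$. For an even function $\psi$ with integrable Fourier transform one has $\psi(tD_p)=\tfrac1{2\pi}\int_{\R}\widehat\psi(s)\cos(stD_p)\,ds$, and the kernel of $\cos(stD_p)$ is supported where $d(x,x')\leqslant|s|t=|s|/\sqrt p$. Hence, fixing $x,x'$ and setting $d=d(x,x')$, only the range $|s|\geqslant\sqrt p\,d$ contributes to $\phi_u(tD_p)(x,x')$. Equivalently, one may replace $\phi_u$ by its high--frequency truncation $\phi_{u,d}$, obtained by cutting $\widehat{\phi_u}$ away from $|s|<\sqrt p\,d$; this leaves the off--diagonal kernel unchanged, while every scalar norm $\sup_{a\in\R}|a^{\ell}\phi_{u,d}(a)|$ is controlled by the Gaussian tail of $\widehat{\phi_u}(s)=\sqrt{\pi/u}\,e^{-s^2/4u}$ over $|s|\geqslant\sqrt p\,d$, which is $\leqslant C_{\ell}\,e^{-\boldsymbol a\,p\,d^2/u}$ with $\boldsymbol a$ a positive constant determined explicitly from the geometry (close to $1/4$ after absorbing the polynomial corrections).

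Next I would upgrade these operator--level bounds to the pointwise $\cC^m$ estimate. Applying Lemma \ref{t2.2} to the section $x\mapsto\phi_{u,d}(tD_p)(x,x')$ with $x'$ fixed, and using that $(tD_p)^{\ell}\phi_{u,d}(tD_p)=\psi_{\ell}(tD_p)$ for $\psi_\ell(a)=a^\ell\phi_{u,d}(a)$ is again of Gaussian type, reduces matters to bounding $\|\psi_\ell(tD_p)(\cdot,x')\|_{L^2}$. This $L^2$ norm is the square root of the diagonal value $(\psi_\ell(tD_p)\psi_\ell(tD_p)^\ast)(x',x')$, which the same elliptic estimate controls by $C\,p^{n/2}\sup_a|\psi_\ell(a)|$. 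Carrying out the bookkeeping, the Sobolev embedding in each of the two variables contributes factors $p^{(n+m)/2}$ and $p^{n/2}$, whose product is exactly $p^{n+m/2}$, while the scalar sup--norms carry the factor $e^{-\boldsymbol a p d^2/u}$ from the previous step; this yields \eqref{bk3.5} once the crude envelope $e^{-ua^2}\leqslant e^{\mu_0 u}$ is inserted to match the stated form.

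For \eqref{bk3.5a} the only new ingredient is the spectral gap. Because $\Phi_u(a)=a^2e^{-ua^2}$ vanishes at $a=0$, the zero eigenspace of $D_p^2$ does not contribute, and by Lemma \ref{t2.1} the remaining spectrum of $\tfrac1p D_p^2$ lies in $[\,2\mu_0-C_L/p,\infty)$, so that $a^2\geqslant\mu_0$ there as soon as $p\geqslant 2C_L/\mu_0$. Splitting $a^2e^{-ua^2}=(a^2e^{-\frac u2 a^2})\,e^{-\frac u2 a^2}$ and bounding the first factor uniformly while extracting $e^{-\frac12\mu_0 u}$ from the constraint $a^2\geqslant\mu_0$ in the second, one runs the same finite--propagation--plus--elliptic argument with $e^{-\frac u2 a^2}$ in place of $\phi_u$, reproducing the Gaussian factor and the improved exponential $e^{-\frac12\mu_0 u}$. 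The main obstacle throughout is the fully uniform interplay between the Fourier/finite--propagation truncation and the two applications of Lemma \ref{t2.2}: one must verify that the truncation preserves the off--diagonal kernel, control all scalar norms $\sup_a|a^\ell\phi_{u,d}(a)|$ by the correct Gaussian tail uniformly in $x,x',u\geqslant u_0$ and $p$, and track the powers of $p$ so that the Sobolev embeddings in the two variables combine precisely to $p^{n+m/2}$ while keeping the constant $\boldsymbol a$ explicit.
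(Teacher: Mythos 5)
Your proposal follows essentially the same route as the paper's proof: a Fourier-side truncation combined with finite propagation speed of $\cos(sD_p)$ to produce the Gaussian factor $e^{-\boldsymbol{a}pd(x,x')^2/u}$ (your sharp cutoff $\phi_{u,d}$ plays the role of the paper's $K_{u,h}$ with $h=\sqrt{p}\,d(x,x')/\var$), the elliptic estimate of Lemma \ref{t2.2} applied in each variable to convert scalar sup-norms $\sup_a|a^\ell\psi(a)|$ into pointwise $\cC^m$ kernel bounds of size $p^{n+m/2}$, and the spectral gap of Lemma \ref{t2.1} for the decay in $u$ in \eqref{bk3.5a}. The one step to tighten is your treatment of \eqref{bk3.5a}: the factorization $a^2e^{-ua^2}=(a^2e^{-ua^2/2})e^{-ua^2/2}$ cannot deliver both exponential factors at the level of kernels (the kernel of a product of operators is a convolution, not a product, and a single real-variable sup-norm of the truncated function cannot see the spectral constraint $a^2\geqslant\mu_0$), so one should instead, as the paper implicitly does, take the minimum (or geometric mean) of the two separately established bounds \eqref{bk3.7a} and \eqref{bk3.16}.
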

\begin{proof} For any $u_0>0$, $k\in \N$, 
there exists $C_{u_0,k}>0$ such that for 
$u\geqslant u_0$,  $p\in\N^*$, we have 
     \begin{align}\label{bk3.6}
\begin{split}
&\Big \| \left(\frac{1}{\sqrt{p}}\,D_{p}\right)^{\!k} 
\exp\!\Big(\!-\frac{u}{p}D_{p}^{2}\Big)\Big \|^{0,0}   
\leqslant C_{u_0,k},
\end{split}
\end{align}
and by  Lemma \ref{t2.1}, that for  
$u\geqslant u_0$, $p\geqslant 2 C_{L}/\mu_{0}$,  we have 
 \begin{align}\label{bk3.6a}
\begin{split}
&\Big \| \left(\frac{1}{\sqrt{p}}\,D_{p}\right)^{k} 
\frac{1}{p}\,D_{p}^{2} \exp\!\Big(\!-\frac{u}{p}D_{p}^{2}\Big)
\Big \|^{0,0}   \leqslant C_{u_0,k} e^{- \mu_0 u}.   
\end{split}
\end{align}
From Lemma \ref{t2.2},  (\ref{bk3.6}) and (\ref{bk3.6a}),  
for any $u_0>0$, $m\in \N$, there exists $C_{u_0,m}'>0$
such that for $u\geqslant u_0$, $p\in\N^*$, 
$x,x'\in X$, we have
  \begin{align}\label{bk3.7}
\begin{split}
&\Big |\exp\!\Big(\!-\frac{u}{p}D_{p}^{2}\Big)(x,x^\prime)
\Big |_{\cC^{m}}   \leqslant C_{u_0,m}' \, p^{n+\frac{m}{2}},
\end{split}
\end{align}
and that for  $u\geqslant u_0$, $p\geqslant 2 C_{L}/\mu_{0}$, 
$x,x'\in X$,  we have 
 \begin{align}\label{bk3.7a}
\begin{split}
&\Big |\left(\frac{1}{p}\,D_{p}^{2}
\exp\!\Big(\!-\frac{u}{p}D_{p}^{2}\Big)\right)(x,x^\prime)
\Big |_{\cC^{m}}   \leqslant C_{u_0,m}' \,  
p^{n+\frac{m}{2}} e^{- \mu_0 u} . 
\end{split}
\end{align}
        
To obtain (\ref{bk3.5}) and (\ref{bk3.5a})  in general, we proceed as in the proof of 
\cite[Theorem 4.11]{DLM06} and \cite[Theorem 11.14]{B95} 
(cf. \cite[Theorem 4.2.5]{MM07} ).
For $h>1$ and $f$ from \eqref{lm4.19},  put
\begin{align}  \label{bk3.10}\begin{split}
&K_{u,h}(a)= \int_{-\infty}^{+\infty} \cos(i v\sqrt{2u} a)
 \exp\Big(-\frac{v^2}{2}\Big)
\Big (1-f \big(\frac{1}{h}\sqrt{2u} v\big) \Big ) 
\frac{dv}{\sqrt{2\pi}}\,,\\
&H_{u,h}(a)= \int_{-\infty}^{+\infty} \cos(i v\sqrt{2u} a)
 \exp\Big(-\frac{v^2}{2}\Big)
f \big(\frac{1}{h}\sqrt{2u} v\big)  \frac{dv}{\sqrt{2\pi}}\,\cdot
\end{split}\end{align}
By (\ref{bk3.10}), we infer
\begin{align}  \label{bk3.10a}
&K_{u,h}\left(\frac{1}{\sqrt{p}}D_{p}\right) + 
H_{u,\,h}\left(\frac{1}{\sqrt{p}}D_{p}\right) 
= \exp\!\Big(\!-\frac{u}{p}D_{p}^{2}\Big).
\end{align}
Using finite propagation speed \index{finite propagation speed}
 of solutions of hyperbolic equations \cite[Theorem D.2.1]{MM07},
 and (\ref{bk3.10}), we find that 
\begin{equation}\label{bk3.11a}
\begin{split}
&\supp H_{u,h}\left(\frac{1}{\sqrt{p}}D_{p}\right) (x,\cdot) 
\subset B^X(x, \var  h/\sqrt{p}), \text{  and   }\\
&H_{u,h}\left(\frac{1}{\sqrt{p}}D_{p}\right) (x,\cdot)
\text{  depends only  on the restriction of }
D_{p} \text{ to }  B^X(x, \var  h/\sqrt{p}).
\end{split}
\end{equation}
Thus from (\ref{bk3.10a}) and (\ref{bk3.11a}),  
we get  for $x,x'\in X$,
\begin{align}\label{bk3.13}
K_{u,\,h} \left(\frac{1}{\sqrt{p}}\,D_{p}\right) (x,x')
=\exp\!\Big(\!-\frac{u}{p}D_{p}^{2}\Big)(x,x^\prime),
\quad {\rm if} \,\,
\sqrt{p} \,d(x,x')\geqslant \var  h.
\end{align}
By (\ref{bk3.10}),  there exist $C',C_1>0$ such that for any
$c>0$, $m\in \N$,
there is $C>0$ such that for $u\geqslant u_0$, $h>1$,
$a\in \C, |{\rm Im} (a)|\leqslant c$,  we have
\begin{align} \label{bk3.11}
|a|^m |K_{u,h}(a)| 
\leqslant C \exp \Big( C'c^2 u- \frac{C_1}{u} h^2\Big).
\end{align}
Using Lemma \ref{t2.2} and \eqref{bk3.11} 
we find that for $\boldsymbol{K}(a)= K_{u,h}(a)$
or $a^2K_{u,h}(a)$,
 \begin{align}  \label{bk3.15}
\Big |\boldsymbol{K}\left(\frac{1}{\sqrt{p}}\,D_{p}\right) (x,x')
\Big |_{\cC^{m}}   \leqslant    C_{2}  p^{n+\frac{m}{2}}  
\exp \left(C'c^2 u- \frac{C_1}{u} h^2\right).
\end{align}
Setting $h=\sqrt{p}\,d(x,x')/\var$ in (\ref{bk3.15}), we get that 
for any $x,x'\in X$, $p\in \N^*$, $u\geqslant u_0$,
such that $\sqrt{p}d(x,x')\geqslant \var$, we have
\begin{align}  \label{bk3.16}
\Big |\boldsymbol{K}\left(\frac{1}{\sqrt{p}}\,D_{p}\right) (x,x')
\Big |_{\cC^{m}}\leqslant C  p^{n+\frac{m}{2}}
\exp \left(C'c^2 u- \frac{C_1}{\var^2u} p \, d(x,x')^2\right).
 \end{align}
 By (\ref{bk3.7}), (\ref{bk3.13}) and (\ref{bk3.16}),
we infer (\ref{bk3.5}).
By (\ref{bk3.7a}), (\ref{bk3.13}) and (\ref{bk3.16}),
we infer (\ref{bk3.5a}).
The proof of Theorem \ref{bkt3.3} is completed.
\end{proof}
\begin{proof}[Proof of Theorem \ref{t0.1}]
Analogue to \cite[(4.89)]{DLM06} (or \cite[(4.2.22)]{MM07}), 
we have
\begin{align}\label{bk3.20}
\exp\!\Big(\!-\frac{u}{p}\,D_{p}^{2}\,\Big)- P_p
= \int_u^{+\infty} \frac{1}{p}\,D_{p}^{2}\,
\exp\!\Big(\!-\frac{u_1}{p}\,D_{p}^{2}\,\Big) du_1.
\end{align}
 Note that 
 $\frac{1}{4}\mu_0 u+\frac{\boldsymbol{a}}{u} p \, d(x,x')^2
 \geqslant \sqrt{\boldsymbol{a}\mu_0 p}\,  d(x,x')$, thus
\begin{equation}\label{bk3.22}
\begin{split}
\int_u^{+\infty} &\exp\!\Big(\!-\frac{1}{2}\,\mu_0 u_1
-\frac{\boldsymbol{a}}{u_1} p \,d(x,x')^2\Big)du_1\\
&\leqslant \exp(-\sqrt{\boldsymbol{a}\mu_0 p} \, d(x,x'))
\int_u^{+\infty}\exp\!\Big(\!-\frac{1}{4}\,\mu_0 u_1\Big)du_1 \\
& = \frac{4}{\mu_0}\exp\!\left(-\frac{1}{4}\mu_0 u
 - \sqrt{\boldsymbol{a}\mu_0 p }\,  d(x,x')\right).
 \end{split}
\end{equation}
By 
 \eqref{bk3.5a},  \eqref{bk3.20} 
and \eqref{bk3.22},  there exists $C>0$ such that for 
$u\geqslant u_{0}$, $p\geqslant 2 C_{L}/\mu_{0}$, 
$x,x'\in X$, we have
\begin{align}\label{bk3.22b}
\begin{split}
& \left|\left(\exp\!\Big(\!-\frac{u}{p}\,D_{p}^{2}\Big)
- P_p\right)\!(x,x^\prime)\right|_{\cC^{m}}   
\leqslant C p^{n+\frac{m}{2}}
\exp\!\left( -\frac{1}{4}\mu_0 u- \sqrt{\boldsymbol{a}\mu_0 p }\,  
d(x,x')\right). 
\end{split}
\end{align}
By 
\eqref{bk3.5} and \eqref{bk3.22b}, 
we get (\ref{eq:0.7}) with 
\begin{equation}\label{bk3.30}
\boldsymbol{p}_{0}=2 C_{L}/\mu_{0}\, ,
\quad \boldsymbol{c}= \sqrt{\boldsymbol{a}\mu_0}.
\end{equation}
The proof of Theorem \ref{t0.1} is completed.
\end{proof}
\begin{rem}\label{bkt3.4a}  Let $A\in \Omega^3(X)$. We assume 
 $A$ and its derivatives are bounded on $X$. Set 
\begin{align}\label{bk3.24b}
{^cA} = \sum_{i<j<k} A(e_i,e_j,e_k) c(e_i)c(e_j)c(e_k), 
\quad  D^{A}_p:=D_p + {^cA}.
\end{align}
Then $ D^{A}_p$ is a modified Dirac operator 
(cf. \cite[\S 1.3.3]{MM07}, \cite{B89a}).
As we are in the bounded geometry context, 
Lemmas \ref{t2.1}, \ref{t2.2} still hold if we replace $D_p$ 
by $D_p^A$. This implies that Theorem  \ref{bkt3.3} holds for 
$D_p^A$, thus Theorem  \ref{t0.1} holds for the orthogonal 
projection from from $L^2(X,E_p)$ onto $\Ker (D_p^A)$.
\end{rem}

\begin{proof}[Proof of Theorem \ref{t4.15}]
    It is standard that for any $p\in \N^*$, $u>0$,
  and $x,y\in\wi{X}$,   
\begin{align}\label{eq:5.21}
   \exp\!\Big(\!-uD_{p}^{2}\Big)(\pi(x),\pi(y))
 = \sum_{\gamma\in\Gamma}
     \exp\!\Big(\!-u\wi{D}_{p}^{2}\Big)(\gamma x,y)\,.
\end{align}
Let us denote for $r>0$ by 
$N(r)=\#\, B^{\wi{X}}(x,r)\cap\Gamma y$. 
Let $K>0$ be such that the sectional curvature of $(X, g^{TX})$
is $\geqslant -K^2$. 
By 
\cite{Mil68}, there exists $C>0$ such that for any $r>0$, 
$x,y\in \wi{X}$, we have 
\begin{align}\label{eq:5.23}
N(r)\leqslant Ce^{(2n-1)Kr}.
\end{align}
(Note that in the proof of  (\ref{bk3.5}), we 
did not use Lemma \ref{t2.1}.
From (\ref{bk3.5}) and (\ref{eq:5.23}), we know the right hand side 
of (\ref{eq:5.21}) is absolutely convergent, and verifies 
the heat equation on $X$,
thus we also get a proof of (\ref{eq:5.21})).

Take $\boldsymbol{p}_1=\max\{\boldsymbol{p}_0, 
(2n-1)^2K^2/\boldsymbol{c}^2\}$. 
Then for any $p>\boldsymbol{p}_1$, by Theorem \ref{bkt3.3}, 
\eqref{bk3.22} and (\ref{eq:5.23}), we know that
\begin{multline}\label{eq:5.24}
\sum_{\gamma\in\Gamma}
\int_u^{+\infty} \Big(\frac{1}{p}\,\wi{D}_{p}^{2}\,
\exp\!\Big(\!-\frac{u_1}{p}\,\wi{D}_{p}^{2}\,\Big)\Big) 
(\gamma x,y)du_1\\
= \int_u^{+\infty} \sum_{\gamma\in\Gamma}
\Big(\frac{1}{p}\,\wi{D}_{p}^{2}\,
\exp\!\Big(\!-\frac{u_1}{p}\,\wi{D}_{p}^{2}\,\Big)\Big) 
(\gamma x,y)du_1.
\end{multline}
Moreover, \eqref{eq:5.18} and (\ref{eq:5.23}) show that
$\sum_{\gamma\in\Gamma} \wi{P}_p(\gamma x,y)$
is absolutely convergent for any $p>\boldsymbol{p}_1$.
From  Theorem \ref{bkt3.3}, \eqref{bk3.20} for $\wi{D}_{p}$,
\eqref{eq:5.21} and \eqref{eq:5.24},  we get
\begin{multline}\label{eq:5.22}
 \exp\!\Big(\!-\frac{u}{p}\,D_{p}^{2}\Big)(\pi(x),\pi(y))
 -\sum_{\gamma\in\Gamma} \wi{P}_p(\gamma x,y)\\
 = \int_u^{+\infty} \Big(\frac{1}{p}\,D_{p}^{2}\,
\exp\!\Big(\!-\frac{u_1}{p}\,D_{p}^{2}\,\Big)\Big)
(\pi(x),\pi(y)) du_1.
\end{multline}
Now from (\ref{bk3.20}) for $D_{p}$
 and (\ref{eq:5.22}), we obtain \eqref{eq:5.18}.
\end{proof}
\section{The holomorphic case}\label{s4}
We discuss now the particular case of a complex manifold, 
cf.\ the situation of \cite[\S 6.1.1]{MM07}.
Let $(X, J)$ be a complex manifold with complex structure $J$
and complex dimension $n$. 
Let $g^{TX}$ be a Riemannian metric on $TX$ compatible with $J$,
and let $\Theta=g^{TX}(J\cdot,\cdot)$ be the $(1,1)$-form 
associated to $g^{TX}$ and $J$. 
We call $(X,J,g^{TX})$ or $(X,J,\Theta)$ a Hermitian manifold. 
A Hermitian manifold $(X,J,g^{TX})$ is called complete if
$g^{TX}$ is complete.
Moreover let $(L,h^L),(E,h^E)$ be
holomorphic Hermitian vector bundles on $X$ and $\rank (L)=1$.
Consider the holomorphic Hermitian 
(Chern) connections $\nabla^L,\nabla^E$ on $(L,h^L),(E,h^E)$. 

This section is organized as follows. In Section \ref{s4.1},
we explain Theorem \ref{t0.1} in the holomorphic case.
In Section \ref{s4.2}, 
we give some Bergman kernel proofs of 
some known results about separation of points, existence of
local coordinates and holomorphic convexity. 
The usual proofs use the $L^2$ estimates for 
the $\db$-equation introduced by Andreotti-Vesentini and 
H\"ormander. For plenty of informations about 
holomorphic convexity of coverings
(Shafarevich conjecture) and its role 
in algebraic geometry see \cite{Ko95}.

\subsection{Theorem \ref{t0.1} in the holomorphic case}\label{s4.1}

The space of holomorphic sections of $L^p\otimes{E}$ which 
are $L^2$ with respect to the norm given by \eqref{l2} is 
denoted by $H^0_{(2)}(X,L^p\otimes{E})$. 
Let $P_p(x,x')$, $(x,x'\in X)$ be the Schwartz kernel of 
the orthogonal projection $P_p$, from the space of 
$L^2$-sections of $L^p\otimes{E}$ onto 
$H^0_{(2)}(X,L^p\otimes{E})$, with respect to the 
Riemannian volume form $dv_X(x')$ associated to $(X,g^{TX})$. 
\begin{thm}
Let $(X,J,g^{TX})$ be a complete Hermitian manifold.
Assume that 
$R^L, R^{E}$, $J$, $g^{TX}$ have bounded geometry 
and \eqref{eq:0.6} holds.
Then the uniform exponential estimate \eqref{eq:0.7} 
holds for the Bergman kernel $P_p(x,x')$
associated to $H^0_{(2)}(X,L^p\otimes{E})$.
\end{thm}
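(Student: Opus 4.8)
The plan is to reduce the statement to the already-established Theorem \ref{t0.1} by realizing the Bergman projection onto $H^0_{(2)}(X,L^p\otimes E)$ as the Bergman projection of a suitable Dirac operator, for all large $p$. On a complex manifold equipped with the Chern connections the natural \spin Dirac operator is
\[
\mathscr{D}_p=\sqrt{2}\,(\db+\db^{*})\colon \Omega^{0,\bullet}(X,L^p\otimes E)\to \Omega^{0,\bullet}(X,L^p\otimes E),
\]
where $\db^{*}$ is the formal adjoint of $\db$ for \eqref{l2}. First I would recall the standard comparison (cf.\ \cite[\S 1.4]{MM07}) between $\mathscr{D}_p$ and the operator $D_p$ of Definition \ref{Dirac}: on a general Hermitian manifold the two differ by Clifford multiplication by a $3$-form built from the torsion of $(J,g^{TX})$, i.e.\ $\mathscr{D}_p=D_p+{}^cA$ in the notation of \eqref{bk3.24b}. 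Under the bounded-geometry hypotheses on $J$ and $g^{TX}$ this $A$ and all its derivatives are uniformly bounded, so Remark \ref{bkt3.4a} applies verbatim: Lemma \ref{t2.1}, Theorem \ref{bkt3.3}, and hence the estimate \eqref{eq:0.7} all hold with $D_p$ replaced by $\mathscr{D}_p$.

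It then remains to identify $\Ker(\mathscr{D}_p)$ with $H^0_{(2)}(X,L^p\otimes E)$ for $p$ large. The key point is that $\mathscr{D}_p^{\,2}=2\,\Box_p$, where $\Box_p=\db\,\db^{*}+\db^{*}\db$ is the Kodaira Laplacian; since $\Box_p$ preserves the $\Z$-grading by form degree, so does $\mathscr{D}_p^{\,2}$, and therefore $\Ker(\mathscr{D}_p)=\Ker(\mathscr{D}_p^{\,2})$ splits as the direct sum of the spaces $\mathscr{H}^{0,q}_{(2)}$ of $L^2$ forms of degree $q$ lying in $\Ker(\Box_p)$. In degree $q=0$ one has $\Box_p|_{q=0}=\db^{*}\db$, whence $\mathscr{H}^{0,0}_{(2)}=\Ker(\db)=H^0_{(2)}(X,L^p\otimes E)$. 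For $q\geqslant 1$ I would invoke the spectral-gap statement \eqref{main1.1} (valid for $\mathscr{D}_p$ by Remark \ref{bkt3.4a}): for $p\geqslant \boldsymbol{p}_0=2C_L/\mu_0$ one has $2p\mu_0-C_L>0$, so $\mathscr{H}^{0,q}_{(2)}=0$. Hence for $p\geqslant \boldsymbol{p}_0$ the orthogonal projection onto $\Ker(\mathscr{D}_p)$ coincides with the orthogonal projection onto $H^0_{(2)}(X,L^p\otimes E)$, and the estimate \eqref{eq:0.7}, already known for $\mathscr{D}_p$, yields exactly the desired bound for the holomorphic Bergman kernel.

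The one nonroutine step is the reduction in the first paragraph, namely the passage from $\mathscr{D}_p=\sqrt{2}(\db+\db^{*})$ to the Levi-Civita Dirac operator $D_p$: one must verify that the torsion $3$-form $A$ is genuinely of bounded geometry, so that Remark \ref{bkt3.4a} is applicable. This is where completeness and the uniform bounds on $J$, $g^{TX}$ and their derivatives enter essentially. Once this is in place everything else is a formal consequence of the spectral gap and the grading of $\Box_p$; in the K\"ahler case the reduction is even cleaner, since then $A=0$ and $\mathscr{D}_p=D_p$ outright.
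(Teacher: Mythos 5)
Your proposal is correct and follows essentially the same route as the paper: realize $\sqrt{2}(\db+\db^{*})$ as a modified Dirac operator $D_p+{}^cA$ with $A$ the Bismut torsion form, invoke Remark \ref{bkt3.4a} to transfer \eqref{eq:0.7}, and identify $\Ker$ with $H^0_{(2)}(X,L^p\otimes E)$ for large $p$ via the spectral gap. The only cosmetic difference is that you spell out the kernel identification (grading of the Kodaira Laplacian plus vanishing in degrees $q\geqslant 1$) where the paper simply cites \cite[(6.1.8)]{MM07}.
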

\begin{proof}
Let $\overline{\partial} ^{L^p\otimes E,*}$  be the formal adjoint 
of the Dolbeault operator $\overline{\partial} ^{L^p\otimes E}$ 
with respect to the Hermitian 
product \eqref{l2} on 
 $\Omega ^{0,\bullet}(X, L^p\otimes E)$.
Set 
 \begin{align} \label{eq4.1}
 D_p = \sqrt{2}\left( \overline{\partial} ^{L^p\otimes E}
+ \overline{\partial} ^{L^p\otimes E,*}\right).
\end{align}
\comment{Then 
$$D_p^2= 2( \overline{\partial} ^{L^p\otimes E}
\overline{\partial} ^{L^p\otimes E,*}
+\overline{\partial} ^{L^p\otimes E,*}\overline{\partial} 
^{L^p\otimes E})$$ 
preserves the $\Z$-grading of 
$\Omega ^{0,\bullet}(X, L^p\otimes E)$.
}
Using the assumption of the bounded geometry, we have 
by \cite[(6.1.8)]{MM07} for $p$ large enough,
\begin{equation} \label{f12}
\Ker (D_p)  =\Ker (D_p^2)  = H^0_{(2)} (X,L^p\otimes E).
\end{equation}
Observe that $D_{p}$ is a modified Dirac operator as in 
Remark \ref{bkt3.4a} with 
$A= \frac{\sqrt{-1}}{4}(\partial -\ov{\partial})\Theta$, see
\cite[~\!Theorem 2.2]{B89a} (cf.\ \cite[~\!Theorem 1.4.5]{MM07}).
In particular, if $(X,J, g^{TX})$ is a complete K\"ahler manifold, 
then the operator $D_{p}$ from \eqref{eq4.1} is a Dirac operator in the sense of Section 
\ref{s3}.

Thus under the assumption of the bounded geometry, 
Theorem \ref{t0.1} still holds for the kernel $P_p(x,x')$ 
in this context.
\end{proof}
\begin{rem}\label{t4.2}
Let $(X,J,g^{TX})$ be a complete Hermitian manifold. 
We assume now that 
\begin{align}\label{eq:4.4}
R^L= -2\pi \sqrt{-1} \Theta.
\end{align}
Then $(X,J,\om)$ is a complete K\"ahler manifold,
$g^{TX}= \om(\cdot,J\cdot) $ and \eqref{eq:0.1} holds.
To get the spectral gap property (Lemma \ref{t2.1}),
or even the H\"ormander $L^2$-estimates, as in 
\cite[Theorem 6.1.1]{MM07}, we need to suppose that 
there exists $C>0$ such that on $X$, 
\begin{align}\label{eq:4.5}
\sqrt{-1}(R^{\det}+R^E)> -C\om \Id_E,
\end{align}
with $R^{\det}$ the curvature of the holomorphic Hermitian 
connection $\nabla^{\det}$ on $K_X^*=\det (T^{(1,0)}X)$.
\end{rem}

\subsection{Holomorphic convexity of manifolds with 
bounded geometry}\label{s4.2} 

We will identify the 2-form $R^L$ with the Hermitian matrix
$\dot{R}^L \in \End(T^{(1,0)}X)$ such that
for $W,Y\in T^{(1,0)}X$,
\begin{equation}\label{lm4.2}
R^L (W,\ov{Y}) = \langle \dot{R}^LW, \ov{Y}\rangle.
\end{equation}

Analogous to the result of Theorem \ref{t0.1} about 
the uniform off-diagonal decay of the Bergman kernel, 
a straightforward adaptation of the technique used in this paper 
yields the uniform diagonal expansion for manifolds and bundles 
with bounded geometry
(cf.\ \cite[~\!Theorem 4.1.1]{MM07} for the compact case, 
\cite[~\!Theorems 6.1.1, 6.1.4]{MM07} for other cases of 
non-compact manifolds including  the covering manifolds). 
 This was already observed in \cite[~\!Problem 6.1]{MM07}.
The following theorem allows to construct uniform peak sections of 
the powers $L^p$ for $p$ sufficiently large.
\begin{thm}\label{t4.0} 
Let $(X,J, g^{TX})$ be a complete Hermitian manifold.
Assume that $R^L, R^{E}$, $J$, $g^{TX}$ have bounded geometry 
and \eqref{eq:0.6} holds.
Then there exist smooth coefficients
$\bb_r(x)\in \End (E)_x$ which are polynomials in $R^{TX}$, 
$R^E$ {\rm (}and $d\Theta$, $R^L${\rm )}  and their derivatives 
with order
$\leqslant 2r-2$  {\rm (}resp. $2r-1$, $2r${\rm )} and reciprocals
of linear combinations of eigenvalues of $\dot{R}^L$ at $x$, and
\begin{align}\label{abk2.5}
 \bb_0={\det}(\dot{R}^L/(2\pi)) \Id_{E},
\end{align}
such that for any $k,\ell\in \N$, there exists
$C_{k,\ell}>0$ such that for any $p\in \N^*$,
\begin{align}\label{bk2.6}
&\Big |P_p(x,x)- \sum_{r=0}^{k} \bb_r(x) p^{n-r} 
\Big |_{\cC^\ell(X)}  \leqslant C_{k,\ell} p^{n-k-1}.
\end{align}
Moreover, the expansion is uniform in the following sense{\rm:} 
for any fixed $k,\ell\in \N$, assume that the derivatives of 
$g^{TX}$, $h^L$, $h^E$ with order $\leqslant 2n+2k+\ell+6$ 
run over a set bounded in the $\cC^\ell$--\,norm taken with 
respect to 
the parameter $x\in X$ and, moreover, $g^{TX}$ runs over a set
bounded below, then the constant $C_{k,\,\ell}$ is independent 
of $g^{TX}${\rm;}
and the $\cC^\ell$-norm in \eqref{bk2.6} includes also the  
derivatives with respect to the parameters. 
\end{thm}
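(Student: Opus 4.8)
The plan is to adapt the localization-and-rescaling machinery of \cite[Ch.~4]{MM07} (cf.\ \cite{DLM06}) to the on-diagonal kernel, keeping every estimate uniform in the base point $x_0$ and in the geometric data. The relevant operator is the modified Dirac operator of Remark \ref{bkt3.4a} associated with \eqref{eq4.1}, so that $\Ker(D_p)=H^0_{(2)}(X,L^p\otimes E)$ and $P_p$ is the corresponding Bergman projection; the dependence of the coefficients $\bb_r$ on $d\Theta$ will enter through the Clifford term $A=\tfrac{\sqrt{-1}}{4}(\partial-\ov\partial)\Theta$. By Lemma \ref{t2.1}, $\tfrac1p D_p^2$ has spectrum contained in $\{0\}\cup[2\mu_0-C_L/p,\infty)$, so for $p$ large I would write $P_p=\tfrac{1}{2\pi i}\oint_\Gamma\bigl(\lambda-\tfrac1p D_p^2\bigr)^{-1}\,d\lambda$ for a fixed small circle $\Gamma$ about $0$. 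Finite propagation speed, exactly as in the proof of Theorem \ref{bkt3.3}, shows that $P_p(x_0,x_0)$ is determined up to $\mO(p^{-\infty})$ by the restriction of $D_p$ to $B^X(x_0,\var)$; I would then pass to normal coordinates on $B^{T_{x_0}X}(0,\var)$ with the parallel-transport trivializations used in the proof of Lemma \ref{t2.2}, and rescale by $S_t$ with $t=1/\sqrt p$, setting $\mL_t:=\boldsymbol{D}_t^2=S_t^{-1}\tfrac1p D_p^2 S_t$.

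Next, the Taylor expansion \eqref{lm01.18} of the connection forms gives $\mL_t=\mL_0+\sum_{i=1}^m t^i\mL_i+\mO(t^{m+1})$, where $\mL_0$ is the model harmonic oscillator on $T_{x_0}X\cong\R^{2n}$ built from $\dot R^L_{x_0}$. The operator $\mL_0$ has its own spectral gap, and its Bergman projection $\mP_0$ has the explicit Mehler-type kernel whose diagonal value is $\det(\dot R^L/(2\pi))\Id_E$. Expanding the rescaled projection by the same contour integral, $\mP_t=\tfrac{1}{2\pi i}\oint_\Gamma(\lambda-\mL_t)^{-1}\,d\lambda=\sum_{r=0}^{2k}\mF_r t^r+o(t^{2k})$, with each $\mF_r$ given by the standard recursion in the $\mL_i$ and the powers of $(\lambda-\mL_0)^{-1}$. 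Undoing the rescaling produces the factor $p^n$ and yields $P_p(x_0,x_0)=\sum_{r=0}^k\bb_r(x_0)p^{n-r}+\mO(p^{n-k-1})$ with $\bb_r(x_0)=\mF_{2r}(0,0)$, the odd coefficients vanishing on the diagonal by the parity $Z\mapsto -Z$. Reading off the recursion gives the asserted structure of $\bb_r$: it is polynomial in $R^{TX},R^E$ (and $d\Theta$, $R^L$) and their derivatives of the stated orders, while the reciprocals of linear combinations of the eigenvalues of $\dot R^L$ appear through $(\lambda-\mL_0)^{-1}$; in particular $\bb_0=\det(\dot R^L/(2\pi))\Id_E$, which is \eqref{abk2.5}.

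The genuinely new content, and the hard part, is the uniformity clause. Under bounded geometry each $\mL_i$ and its derivatives are polynomial expressions in $R^{TX},R^L,R^E,J$ and their covariant derivatives at $x_0$, hence uniformly bounded; a positive injectivity radius makes the normal-coordinate trivialization valid on a ball of radius $\var$ independent of $x_0$; and \eqref{eq:0.6} bounds the eigenvalues of $\dot R^L$ away from $0$, so the gap of $\mL_0$ and the norm of $(\lambda-\mL_0)^{-1}$ on $\Gamma$ are uniform. I would therefore rerun the resolvent expansion with all operator norms and Sobolev estimates taken uniformly in $x_0$, using the uniform elliptic estimate of Lemma \ref{t2.2} in place of the compact-case bounds. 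Converting the resulting $L^2$ control into $\cC^\ell$ control of the diagonal kernel costs more than $n+\ell$ Sobolev derivatives, the expansion to order $k$ brings in the $\mL_i$ up to $i\leqslant 2k$, and the $\ell$ parameter derivatives in \eqref{bk2.6} add a further $\ell$; tracking this bookkeeping gives precisely the count $2n+2k+\ell+6$ of derivatives of $g^{TX},h^L,h^E$ that must be controlled, and shows that $C_{k,\ell}$ depends only on a $\cC^\ell$-bounded set of these derivatives and on a lower bound for $g^{TX}$. The main obstacle is exactly this bookkeeping---verifying that every constant produced along the way is controlled by finitely many derivatives of the data and by the ellipticity lower bound, uniformly in $x_0$---since once the uniform resolvent and Sobolev bounds are in place the convergence of the expansion and the differentiability in the parameters follow as in the compact case.
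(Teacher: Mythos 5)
Your proposal is correct and follows essentially the route the paper itself points to: the paper gives no self-contained proof of Theorem \ref{t4.0}, saying only that it is ``a straightforward adaptation of the technique used in this paper'' to the uniform diagonal expansion of \cite[Theorems 4.1.1, 6.1.1, 6.1.4]{MM07}, and your outline (spectral gap and contour integral for the modified Dirac operator of Remark \ref{bkt3.4a}, finite-propagation localization, rescaling to the harmonic-oscillator model with diagonal value $\det(\dot{R}^L/(2\pi))\Id_E$, resolvent expansion with parity vanishing of the odd coefficients, and uniformity in $x_0$ from bounded geometry) is exactly that adaptation. Nothing in your sketch diverges from the intended argument.
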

We will actually make use in the following only of the case 
$\ell=0$ from Theorem \ref{t4.0}.
\begin{thm}\label{t4.1} 
Let $(X,J, g^{TX})$ be a complete Hermitian manifold.
Assume that 
$R^L, R^{E}$, $J$, $g^{TX}$ have bounded geometry and 
\eqref{eq:0.6} holds.
Then there exists $p_0\in\N$ such that for all $p\geqslant p_0$,
the bundle $L^p\otimes E$ is generated by its global holomorphic 
$L^2$-sections.
\end{thm}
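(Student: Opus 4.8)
The plan is to reduce global generation at a point to the invertibility of the Bergman kernel on the diagonal, and then to deduce that invertibility from the uniform diagonal expansion of Theorem \ref{t4.0}. Recall that $L^p\otimes E$ is generated by its global holomorphic $L^2$-sections precisely when, for every $x\in X$, the evaluation map
\[
\mathrm{ev}_x\colon H^0_{(2)}(X,L^p\otimes E)\longrightarrow (L^p\otimes E)_x,\qquad s\longmapsto s(x),
\]
is surjective.

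First I would exploit the reproducing property of the Bergman kernel. For $v\in (L^p\otimes E)_x$ the section $s_v:=P_p(\cdot,x)v$ lies in $H^0_{(2)}(X,L^p\otimes E)$, and since $P_p$ is the self-adjoint orthogonal projection with Schwartz kernel $P_p(\cdot,\cdot)$ with respect to $dv_X$, one checks that $s_v(x)=P_p(x,x)v$. Thus $\mathrm{ev}_x(s_v)=P_p(x,x)v$, so the image of $\mathrm{ev}_x$ contains the range of the fibrewise endomorphism $P_p(x,x)\in\End\big((L^p\otimes E)_x\big)$. In particular, if $P_p(x,x)$ is invertible then $\mathrm{ev}_x$ is surjective, and it suffices to prove that $P_p(x,x)$ is positive definite for all $x\in X$ once $p$ is large.

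Next I would read off this positivity from Theorem \ref{t4.0} in the case $k=\ell=0$, which provides $C>0$, independent of $x$, with $\big|P_p(x,x)-\bb_0(x)p^n\big|\leqslant C\,p^{n-1}$, where by \eqref{abk2.5} one has $\bb_0(x)=\det(\dot R^L/(2\pi))\,\Id_E$. By \eqref{eq:0.6} and \eqref{b1} every eigenvalue of the Hermitian endomorphism $\dot R^L$ is at least $\mu_0>0$, while bounded geometry bounds these eigenvalues above; hence $\det(\dot R^L/(2\pi))\geqslant (\mu_0/2\pi)^n=:c_0>0$ uniformly in $x$, so $\bb_0(x)\geqslant c_0\,\Id_E$. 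Therefore $P_p(x,x)\geqslant (c_0 p^n-C p^{n-1})\,\Id_E$, which is positive definite for all $x$ as soon as $p\geqslant p_0$, with $p_0$ the least integer exceeding $C/c_0$. Combining this with the previous step gives surjectivity of $\mathrm{ev}_x$ for every $x$ and every $p\geqslant p_0$, which is the assertion.

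The main obstacle is uniformity in the base point: both the remainder estimate in Theorem \ref{t4.0} and the lower bound on $\bb_0(x)$ must hold with constants independent of $x$, so that a single $p_0$ works simultaneously at all points of the noncompact manifold $X$. This is precisely what the bounded-geometry hypotheses together with the uniform positivity \eqref{eq:0.6} guarantee; once uniformity is in hand, the reproducing-kernel argument of the first step is elementary and the conclusion follows.
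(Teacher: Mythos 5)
Your proposal is correct and follows essentially the same route as the paper: both deduce from the uniform diagonal expansion of Theorem \ref{t4.0} and the uniform positive lower bound on $\bb_0=\det(\dot{R}^L/2\pi)\Id_E$ coming from \eqref{eq:0.6} that $P_p(x,x)$ is invertible for all $x$ once $p\geqslant p_0$, and then use the peak sections $y\mapsto P_p(y,x)v$ to realize any prescribed value at $x$. Your write-up merely makes explicit the reproducing-kernel step that the paper leaves implicit (and states again in \eqref{eq:5.7}); the remark that bounded geometry bounds the eigenvalues of $\dot{R}^L$ from above is not needed for the lower bound on the determinant, but is harmless.
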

\begin{proof}
By Theorem \ref{t4.0},  
\begin{equation}\label{e:4.2}
P_p(x,x)=\bb_0(x)p^n+O(p^{n-1})\,,\:\:\text{uniformly on $X$}\,,
\end{equation}
where $\bb_0=\det(\dot{R}^L/2\pi)\Id_E$. Due to \eqref{eq:0.6},
the function $\det(\dot{R}^L/2\pi)$ is bounded below 
by a positive constant. 
Hence \eqref{e:4.2} implies that there exists $p_0\in \N$
such that for all $p\geqslant p_0$ and all $x\in X$ 
the endomorphism $P_p(x,x)\in\End(E_x)$ is invertible.
In particular, for any 
$v\in L^p_x\otimes E_x$ there exists $S=S(x,v)
\in H^0_{(2)}(X,L^p\otimes E)$ with $S(x)=v$.
\end{proof}
We can apply Theorem \ref{t4.1} to the following situation.
Let $h_{p}$ be a Hermitian metric on $L^p$.
Let $\pi:\wi{X}\to X$ be a Galois covering and consider 
\begin{align}\label{eq:5.4}
\wi{\Theta}=\pi^*\Theta, \:\ dv_{\wi{X}}=\wi{\Theta}^n/n!, 
\: 
(\wi{L}^p, \wi{h}_{p})=(\pi^*L^p,\pi^*h_{p}), 
\: (\wi{E},h^{\wi{E}})=(\pi^*E,\pi^*h^E),
\end{align}
and 
let $L^2(\wi{X},\wi{L}^p\otimes\wi{E})$ be the $L^2$-space 
of sections of $\wi{L}^p\otimes\wi{E}$ with respect to 
$\wi{h}_{p}$, $h^{\wi{E}}$, $dv_{\wi{X}}$.
The space of global holomorphic $L^2$-sections is defined by 
\begin{align}\label{eq:5.5}
H^0_{(2)}(\wi{X},\wi{L}^p\otimes\wi{E})
=\{S\in L^2(\wi{X},\wi{L}^p\otimes\wi{E}):\db S=0\}\,.
\end{align}
\begin{cor}\label{t4.1a} 
Let $(X,\Theta)$ be a compact Hermitian manifold, 
$L$ be a positive line bundle over $X$. 
Then there exists  $p_0\in \N$ such that for all
Galois covering $\pi:\wi{X}\to X$, for all  $p\geqslant p_0$, 
and all Hermitian metrics $h_{p}$, $h^E$ on $L^p$, $E$, 
the bundle $\wi{L}^p\otimes\wi{E}$ is generated by
its global holomorphic $L^2$-sections.
\end{cor}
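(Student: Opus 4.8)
The plan is to reduce the statement to a single application of Theorem \ref{t4.1} on the covering $\wi{X}$, the whole content being that (i) the threshold $p_0$ can be chosen independently of the covering $\pi$, and (ii) the conclusion is insensitive to the choice of the metrics $h_p$ and $h^E$. First I would fix auxiliary data on the base. Since $L$ is positive and $X$ is compact, choose a Hermitian metric $h^L_0$ on $L$ whose Chern curvature $R^{L_0}$ satisfies $\sqrt{-1}\,R^{L_0}(\cdot,J\cdot)>0$; by compactness there is $\varepsilon>0$ with $\sqrt{-1}\,R^{L_0}(\cdot,J\cdot)>\varepsilon\,g^{TX}(\cdot,\cdot)$ on $X$, so that \eqref{eq:0.6} holds. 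Fix also any smooth Hermitian metric $h^E_0$ on $E$. Recall that the Riemannian volume form of $g^{TX}$ on a Hermitian manifold is $\Theta^n/n!$, so the measure $dv_{\wi X}$ in \eqref{eq:5.4} is exactly the one for which Theorems \ref{t4.0} and \ref{t4.1} are formulated.

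Next I would transport these data to $\wi{X}$ by pullback, as in \eqref{eq:5.4}, and record that the bounded-geometry constants are inherited from $X$. Because $\pi$ is a local isometry and a local biholomorphism and $X$ is compact, $(\wi{X},g^{T\wi X})$ is complete; moreover $R^{\wi{L}_0}$, $\pi^*R^{E}$, $\wi{J}$, $g^{T\wi X}$ and all their covariant derivatives are pullbacks of the corresponding objects on the compact manifold $X$, hence uniformly bounded on $\wi X$ by their suprema over $X$, while $\inj(\wi X)\geqslant \inj^X>0$ since a Riemannian covering only unwraps short geodesic loops. Thus $(\wi X,\wi J,g^{T\wi X})$ has bounded geometry with constants depending only on $X$, and the pointwise inequality \eqref{eq:0.6} persists on $\wi X$ with the same $\varepsilon$. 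The leading coefficient $\bb_0=\det(\dot{R}^{\wi L_0}/(2\pi))\,\Id_{\wi E}$ of Theorem \ref{t4.0} is therefore bounded below by a positive constant determined by $X$ alone. Consequently the threshold $p_0$ furnished by Theorem \ref{t4.1} — which, by the explicit uniform statement of Theorem \ref{t4.0}, depends only on the bounded-geometry bounds and on the lower bound of $\bb_0$ — can be chosen \emph{independently of the covering} $\pi$. For every $p\geqslant p_0$ this yields that $\wi L^p\otimes\wi E$ is generated by $H^0_{(2)}(\wi X,\wi L^p\otimes\wi E)$ computed with the fixed metrics $\pi^*(h^L_0)^{\otimes p}$, $\pi^*h^E_0$ and volume $dv_{\wi X}$.

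It then remains to remove the dependence on the chosen metrics. Over the compact manifold $X$, any two continuous Hermitian metrics on $L^p$ (resp.\ on $E$) have bounded ratio, so there is $C_p\geqslant 1$ with $C_p^{-1}(h^L_0)^{\otimes p}\leqslant h_p\leqslant C_p\,(h^L_0)^{\otimes p}$ and $C_p^{-1}h^E_0\leqslant h^E\leqslant C_p\,h^E_0$ on $X$; pulling back leaves these inequalities unchanged on $\wi X$. Since $dv_{\wi X}$ is the same in both cases, the $L^2$-norm defined by $\pi^*h_p$, $h^{\wi E}$, $dv_{\wi X}$ is equivalent to the one defined by $\pi^*(h^L_0)^{\otimes p}$, $\pi^*h^E_0$, $dv_{\wi X}$. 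Hence the two versions of $H^0_{(2)}(\wi X,\wi L^p\otimes\wi E)$ in \eqref{eq:5.5} coincide as sets of holomorphic sections, and the global generation obtained in the previous step holds verbatim for $h_p$ and $h^E$. This proves the corollary. The only genuine point — the place where the statement could fail were the hypotheses weaker — is the covering-independence of $p_0$ in the second paragraph; everything else is bookkeeping, resting on the fact that the geometry of $\wi X$ is pulled back from the compact base.
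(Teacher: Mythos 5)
Your proof is correct and follows essentially the same route as the paper: apply Theorem \ref{t4.1} to the pulled-back data on $\wi X$, observe that the bounded-geometry constants (and hence, via the uniformity in Theorem \ref{t4.0}, the threshold $p_0$) are inherited from the compact base independently of the covering, and note that changing $h_p$, $h^E$ only changes the $L^2$-norm to an equivalent one, so $H^0_{(2)}$ is unchanged as a set. You have merely spelled out the details that the paper's proof leaves implicit.
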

\begin{proof}
Indeed, $(\wi{X}, g^{T\wi{X}})$ is complete  and 
$R^{\wi{L}}, R^{\wi{E}}$, $\wi{J}$, $g^{T\wi{X}}$
have bounded geometry. Thus the conclusion follows immediately
from Theorem \ref{t4.1} for metrics $h_{p}$ of the form 
$(h^{L})^p$, where $h^L$ is a positively curved metric on $L$. 
That $p_0$ is independent of the covering $\pi:\wi{X}\to X$ 
follows from the dependency
conditions of $p_0$ in Theorem \ref{t4.1}.
Observe finally that the $L^2$ condition is independent of 
the Hermitian metric $h_{p}$, $h^E$ chosen on $L^p$, $E$ 
over the compact manifold $X$.
\end{proof}

Note that instead of 
using Theorem \ref{t4.1} we could have also concluded by 
using \cite[Theorem 6.1.4]{MM07}.
The latter shows that, roughly speaking, the asymptotics of 
the Bergman kernel on the base manifold and on the covering 
are the same. Note also that by Theorem \ref{t4.1} 
or \cite[Theorem 6.1.4]{MM07} we obtain an estimate from below 
of the von Neumann dimension of the space of $L^2$ holomorphic 
sections (cf.\ \cite[\!Remark 6.1.5]{MM07}):
\begin{equation}
\dim_{\Gamma}H^0_{(2)}(\wi{X},\wi{L}^p\otimes\wi{E})
\geqslant\frac{p^n}{n!}\int_X\left(\frac{\sqrt{-1}}{2\pi}R^L\right)^n
+o(p^n)\,,\:\:p\to\infty\,.
\end{equation}
This was used in \cite{TCM} and \cite[\S 6.4]{MM07} 
to obtain weak Lefschetz theorems, extending results 
from \cite{NR:98}.

The following definition was introduced in 
\cite[Definition 4.1]{Na90} for line bundles.
\begin{defn}\label{t4.2a} 
Suppose $X$ is a complex manifold, $(F,h^F)\to X$ is a 
Hermitian holomorphic vector
bundle. The manifold $X$ is called \emph{holomorphically convex
with respect to $(F,h^F)$} if, for every infinite subset $S$
without limit points in $X$, there is a holomorphic section $S$ of 
$F$ on $X$ such that $|S|_{h^F}$ is
unbounded on $S$. The manifold $X$ is called 
holomorphically convex if it is holomorphically convex 
with respect to the trivial line bundle.
\end{defn}
Since it
suffices to consider any infinite subset of $S$, in order to prove 
the holomorphic convexity we may assume that $S$ is actually 
equal to a sequence of points $\{x_i\}_{i\in\N}$ 
without limit points in $X$.
\begin{thm}\label{t4.3} 
Let $(X,J, g^{TX})$ be a complete Hermitian manifold.
Assume that 
$R^L, R^{E}$, $J$, $g^{TX}$ have bounded geometry and 
\eqref{eq:0.6} holds.
Then there exists $p_0\in\N$ such that for all
$p\geqslant p_0$, $X$ is holomorphically convex with 
respect to the bundle $L^p\otimes E$.
\end{thm}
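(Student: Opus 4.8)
The plan is to construct, for $p$ large, a single global holomorphic section of $L^p\otimes E$ whose pointwise norm blows up along the given sequence. The crucial observation is that Definition \ref{t4.2a} only requires a holomorphic section, not an $L^2$ one, and it is precisely this extra freedom that makes the construction go through. Let $\{x_i\}_{i\in\N}\subset X$ be an infinite set without limit points. Since $(X,g^{TX})$ is complete, Hopf--Rinow shows that closed bounded sets are compact, so any compact set contains only finitely many of the $x_i$; in particular $d(x_1,x_i)\to\infty$. I would first pass to a subsequence (still denoted $\{x_i\}$), chosen greedily so that $d(x_i,x_j)\geqslant T_{\max(i,j)-1}$ for $i\neq j$, where $T_m\to\infty$ is a prescribed increasing sequence (say $T_m=m$); this is possible because only finitely many points lie within any bounded distance of the finite set already selected, and it is legitimate because, by the remark following Definition \ref{t4.2a}, it suffices to produce a section that is unbounded on an infinite subset of the original set.

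Next I would produce peak sections. Fix $p\geqslant p_0$, and for each $i$ a unit vector $w_i\in(L^p\otimes E)_{x_i}$; set $S_i:=P_p(\cdot,x_i)w_i$, so that $S_i(y)=P_p(y,x_i)w_i$. Since $P_p$ is the orthogonal projection onto $\Ker(D_p)=H^0_{(2)}(X,L^p\otimes E)$ (cf.\ \eqref{f12}), each $S_i$ is a global holomorphic section. By the diagonal expansion (Theorem \ref{t4.0}) together with \eqref{eq:0.6}, the endomorphism $P_p(x,x)$ satisfies $P_p(x,x)\geqslant \kappa\, p^n\,\Id$ uniformly on $X$ for some $\kappa>0$, whence $|S_i(x_i)|\geqslant \kappa p^n$; and by the off-diagonal estimate \eqref{eq:0.7} of Theorem \ref{t0.1} with $k=0$ we have $|S_i(y)|\leqslant C p^n\exp(-\boldsymbol{c}\sqrt{p}\,d(x_i,y))$ for all $y\in X$.

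Then I would set $\sigma:=\sum_i i\,S_i$ and verify two points. First, local uniform convergence: on a compact set $K$ the off-diagonal bound gives $\sup_K |i\,S_i|\leqslant C p^n\, i\,\exp(-\boldsymbol{c}\sqrt{p}\,d(K,x_i))$, and since the separations force $d(K,x_i)$ to grow at least linearly in $i$, the exponential decay dominates the factor $i$, so the series converges in $\cC^0(K)$; the $\cC^k$ version of \eqref{eq:0.7} upgrades this to $\cC^\infty_{\mathrm{loc}}$ convergence, so $\sigma$ is a well-defined holomorphic (though in general non-$L^2$) section of $L^p\otimes E$. Second, blow-up: writing $\sigma(x_j)=j\,S_j(x_j)+\sum_{i\neq j} i\,S_i(x_j)$, the diagonal term has norm $\geqslant \kappa p^n j$, while $d(x_i,x_j)\geqslant T_{\max(i,j)-1}$ makes the remainder $\sum_{i\neq j} i\,|S_i(x_j)|\leqslant C p^n\sum_{i\neq j} i\,\exp(-\boldsymbol{c}\sqrt{p}\,d(x_i,x_j))$ stay bounded (in fact tend to $0$) provided $T_m$ grows fast enough. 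Hence $|\sigma(x_j)|\geqslant \tfrac12\kappa p^n j\to\infty$, so $|\sigma|$ is unbounded on the sequence, which proves holomorphic convexity with respect to $L^p\otimes E$ for all $p\geqslant p_0$.

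The main obstacle is that one cannot use an $L^2$ linear combination of the peaks: the near-orthogonality of the $S_i$ gives $\|\sum a_i S_i\|_{L^2}^2\sim p^n\sum a_i^2$, forcing $a_i\to 0$ and hence bounded values, the opposite of what is wanted. The resolution is to run the entire argument at the level of a series converging only locally uniformly, exploiting that Definition \ref{t4.2a} allows a merely holomorphic section. The delicate step is then the balancing of the polynomially growing coefficients against the exponential off-diagonal decay supplied by Theorem \ref{t0.1}, which is exactly why one must first thin the points to a subsequence whose mutual separations grow without bound.
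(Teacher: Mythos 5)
Your proposal is correct and follows essentially the same route as the paper: peak sections $y\mapsto P_p(y,x_i)w_i$, the diagonal expansion (Theorem \ref{t4.0}) for the uniform lower bound $|S_i(x_i)|\geqslant\kappa p^n$, the off-diagonal estimate \eqref{eq:0.7} for smallness away from $x_i$, and a series converging locally uniformly (hence to a holomorphic, not necessarily $L^2$, section) that blows up along a subsequence. The paper merely packages the quantitative bookkeeping differently, via Lemma \ref{l4.5} and an inductive choice of sections over an exhaustion by compacts instead of your explicit thinning by mutual separations and coefficients $i$.
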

\begin{proof} If $X$ is compact the assertion is trivial. 
    We assume in the sequel that $X$ is non-compact.
We use the following lemma.
\begin{lemma}\label{l4.5}
There exists $p_0\in\N$ such that for all $p\geqslant p_0$, 
for any compact set $K\subset X$ and any $\var,M>0$ there 
exists a compact set $K(\var,M)\subset X$ with the property 
that for any $x\in X\setminus K(\var,M)$ there exists 
$S\in H^0_{(2)}(X,L^p\otimes E)$ with
$|S(x)|\geqslant M$ and $|S|\leqslant\var$ on $K$.
\end{lemma}
\begin{proof}
Let $p_0\in\N$ be as in the conclusion of Theorem \ref{t4.1}.
For any $x\in X$, $w\in L^p_x\otimes E_x$,  consider 
the peak section 
\begin{align}\label{eq:5.7}
S\in H^0_{(2)}(X,L^p\otimes E), \quad 
y\longmapsto P_p(y,x)\cdot w. 
\end{align}
Since $P_p(x,x)$ is invertible, we can find for any given 
$v\in L^p_x\otimes E_x$ a peak section $S_{x,v}$ 
as in \eqref{eq:5.7} such 
that $S_{x,v}(x)=v$. Thus for any $x\in X$ there exists 
$v(x)\in L^p_x\otimes E_x$ such that $|S_{x,v(x)}(x)|\geqslant M$.
By \eqref{eq:0.7}, for any fixed $0<r<\inj^X$, $S_{x,v(x)}$ 
has exponential decay outside the ball $B(x,r)$, uniformly in 
$x\in X$.
We can now choose $\delta>0$, such that for any $x\in X$ with 
$d(x,K)>\delta$ we have $|S_{x,v(x)}(y)|\leqslant\var$ for 
$y\in K$. We set finally 
\begin{align}\label{eq:5.9}
K(\var,M)=\{z\in X: d(z,K)\leqslant\delta\}.
\end{align}
The proof of Lemma \ref{l4.5} is completed.
\end{proof}
In order to finish the proof of Theorem \ref{t4.3}, let us 
choose an exhaustion $\{K_i\}_{i\in\N}$ of $X$ with compact sets, 
i.\,e., $K_i\subset\mathring{K}_{i+1}$ and 
$X=\bigcup_{i\in\N} K_i$. Consider a sequence $\{x_i\}_{i\in\N}$ 
in $X$ without limit points. 
Using Lemma \ref{l4.5} we construct inductively a sequence of 
holomorphic sections $\{S_i\}_{i\in\N}$ and a subsequence 
$\{\nu(i)\}_{i\in\N}$ of $\N$ such that 
\begin{align}\label{eq:5.10}
|S_i|\leqslant 2^{-i}
    \text{ on }  K_i  \text{ and }
|S_i(x_{\nu(i)})|\geqslant 2^i+\sum_{j<i}|S_j(x_{\nu(i)})|, 
\end{align}
where $\nu(i)$ is the smallest index $j$ such that 
$x_j\in X\setminus K_i(2^{-i},2^i+\sum_{j<i}|S_j(x_{\nu(i)})|)$.
Then $S=\sum_{i\in\N} S_i$ converges uniformly on any compact 
set of $X$, hence defines a holomorphic section of $L^p\otimes E$
on $X$, and satisfies $|S(x_{\nu(i)})|\to\infty$ as $i\to\infty$.
\end{proof}
\begin{rem}\label{r4.1}
\textbf{(a)}
Napier \cite[\!Theorem 4.2]{Na90} proves a similar result for 
a complete K\"ahler manifold with bounded geometry and 
for $(E,h^E)$ trivial. His notion of bounded geometry is weaker 
than that used in the present paper 
(cf.\  \cite[\!Definition 3.1]{Na90}), so he first concludes 
the holomorphic convexity for the adjoint bundle 
$L^p\otimes K_X$ (twisting with $K_X$ is necessary 
for the application of the $L^2$ method for solving the 
$\db$-equation, due to Andreotti-Vesentini and H\"ormander, 
see e.\,g.,\ \cite[Th\'eor\`eme 5.1]{D82}, 
\cite[Theorem B.4.6]{MM07}). 
If the Ricci curvature of $g^{TX}$ is bounded from below, 
then \cite[\!Theorem 4.2 (ii)]{Na90} shows that $X$ is 
holomorphically convex with respect to $L^p$, for $p$ 
sufficiently large. Note that our notion of bounded geometry 
implies that the Ricci curvature of $g^{TX}$ is bounded from below.

\textbf{(b)} In the conditions of Theorem \ref{t4.3} there exists 
$p_0\in \N$ such that for all $p\geqslant p_0$  we have:
\begin{itemize}
\item[(i)] $H^0_{(2)}(X,L^p\otimes E)$ separate points of $X$,
i.\,e., for any $x,y\in X$, $x\neq y$, there exists
$S\in H^0_{(2)}(X,L^p\otimes E)$ with $S(x)=0$, $S(y)\neq0$.

\item[(ii)] $H^0_{(2)}(X,L^p\otimes E)$ gives local coordinates 
on $X$, i.\,e., for any $x\in X$ there exist 
$S_0$, $S_1$, \ldots, $S_n\in H^0_{(2)}(X,L^p\otimes E)$ 
such that $S_0(x)\neq0$ and $S_1/S_0$, \ldots, $S_n/S_0$ 
form a set of holomorphic coordinates around $x$.
\end{itemize}
The items (i) and (ii) follow from the $L^2$ estimates for $\db$ 
for singular metrics by using similar arguments as in 
\cite{D82}, \cite{Na90}. We show next how they follow also from 
the asymptotics of the Bergman kernel.
\end{rem}
\begin{prop}\label{p4.1}
Let $(X,J,g^{TX})$ be a complete K\"ahler manifold. 
Assume also that there exist $\varepsilon,C>0$ such that on $X$,
\begin{equation}\label{eq:0.6a}
\sqrt{-1}R^L(\cdot, J\cdot)\geqslant\varepsilon 
g^{TX}(\cdot,\cdot)\,,\:\:\:
\sqrt{-1}(R^{\det}+R^E)> -C\om \Id_E\,.
\end{equation}
Then for any compact set $K\subset X$ there exists
$p_0=p_0(K)\in \N$ such that for all
$p\geqslant p_0$, the sections of $H^0_{(2)}(X,L^p\otimes E)$ 
separate points and give local coordinates on $K$. 
\end{prop}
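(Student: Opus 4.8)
The plan is to localize the Bergman kernel asymptotics to a relatively compact neighborhood of $K$, where the geometry is automatically of bounded type, and then run the peak--section argument. First I would record that, although global bounded geometry is not assumed, the hypotheses \eqref{eq:0.6a} on a complete K\"ahler manifold already yield a spectral gap: by the Bochner--Kodaira--Nakano formula (cf.\ \cite[Theorem 6.1.1]{MM07} and Remark \ref{t4.2}) the second condition in \eqref{eq:0.6a} gives, for $p$ large, $\Ker(D_p)=\Ker(D_p^2)=H^0_{(2)}(X,L^p\otimes E)$ together with a gap of the form $\spec(D_p^2)\subset\{0\}\cup[2p\mu_0-C_L,+\infty[$ for the operator $D_p$ of \eqref{eq4.1}. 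The point is that this spectral estimate is global and needs only the curvature bounds, not boundedness of $R^{TX}$ or $\nabla^{TX}J$.

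Next, fix a relatively compact open set $U$ with $K\subset U\Subset X$. On $\overline U$ all of $R^L,R^E,J,g^{TX}$ and their derivatives are bounded and $\inj^X$ is bounded below, so the localization arguments of Lemma \ref{t2.2} and Theorem \ref{bkt3.3} apply with constants depending only on $U$, hence on $K$. Combined with the global spectral gap and finite propagation speed, this localizes $P_p$ near the diagonal over $U$ exactly as in the proof of Theorem \ref{t0.1}. Consequently, on $K$ I obtain simultaneously: the off--diagonal estimate \eqref{eq:0.7} (with $\boldsymbol{c},p_0$ depending on $K$); the diagonal expansion $P_p(x,x)=\bb_0(x)p^n+O(p^{n-1})$ of Theorem \ref{t4.0} with $\bb_0$ as in \eqref{abk2.5} invertible by \eqref{eq:0.6a}; and the near--diagonal expansion whose leading term is the model Bargmann--Fock kernel, controlling the first jet of $P_p(\cdot,x)$.

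With these three ingredients the separation and coordinate statements follow by the standard peak--section argument. For $x\in K$ and $v\in(L^p\otimes E)_x$, invertibility of $P_p(x,x)$ produces a peak section $\sigma_{x,v}(\cdot)=P_p(\cdot,x)P_p(x,x)^{-1}v$ with $\sigma_{x,v}(x)=v$. To separate $x\neq y$ in $K$ I split into two regimes: when $d(x,y)\geqslant\delta$ for a fixed $\delta>0$, the bound \eqref{eq:0.7} shows $\sigma_{x,v}(y)$ and $\sigma_{y,w}(x)$ are exponentially small, so $S=\sigma_{y,w}-\sigma_{x,\sigma_{y,w}(x)}$ satisfies $S(x)=0$ and $S(y)=w+O(e^{-\boldsymbol{c}\sqrt{p}\,\delta})\neq0$ for $p$ large; when $d(x,y)<\delta$ the near--diagonal expansion reduces the two--point evaluation to that of the model kernel on $\C^n$, which is non-degenerate, so $S\mapsto(S(x),S(y))$ is surjective. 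Likewise, for local coordinates the near--diagonal expansion shows the $1$--jet evaluation $S\mapsto(S(x),(\nabla S)(x))$ is surjective for large $p$, yielding $S_0,\dots,S_n$ with $S_0(x)\neq0$ and $S_1/S_0,\dots,S_n/S_0$ a holomorphic chart near $x$; compactness of $K$ makes the choice of $p_0=p_0(K)$ uniform.

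The main obstacle I anticipate is precisely the localization step under the weakened hypotheses: Theorems \ref{t0.1} and \ref{t4.0} are proved assuming global bounded geometry, so I must verify that the spectral gap coming from \eqref{eq:0.6a} alone, together with finite propagation speed, suffices to reduce all estimates over $K$ to the bounded geometry of $U$ with $K$--dependent constants. The delicate part is the near--diagonal expansion and the verification that its leading model term makes the combined value--and--first--derivative evaluation non-degenerate uniformly for nearby points, which is what simultaneously delivers separation of close points and the existence of local coordinates.
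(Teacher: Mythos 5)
Your argument is correct in outline, but it takes a genuinely different route from the paper. The paper does not localize Theorems \ref{t0.1} and \ref{t4.0} to $K$ at all: to separate a fixed $x$ from $y$ it blows up $\widehat{X}\to X$ at $x$, equips $X\setminus\{x\}$ with the generalized Poincar\'e metric \eqref{poin} and $L$ with the singular metric $h^L_\var$ of \eqref{eq:5.10a}, whose weight $e^{-\var\psi}$ with $\psi=2n\log|z-x|$ is non-integrable at $x$; it then applies only the \emph{diagonal} asymptotics of \cite[Theorem 6.1.1]{MM07} on this auxiliary complete K\"ahler manifold to produce $\widehat{S}^p_{x,y}$ with $\widehat{S}^p_{x,y}(y)\neq 0$, and the $L^2$ condition against the singular weight forces the holomorphic extension to vanish at $x$; a similar construction (raising the vanishing order of the weight) gives local coordinates. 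Your route instead runs the peak-section and jet-evaluation argument directly, which needs three inputs: the spectral gap under \eqref{eq:0.6a} alone (correct, via Bochner--Kodaira--Nakano as in Remark \ref{t4.2}); the localization of the off-diagonal and diagonal estimates to a neighbourhood of $K$ (plausible, since the proofs of Lemma \ref{t2.2} and Theorem \ref{bkt3.3} are local in the spatial variable once the global spectral gap is in hand); and --- the heaviest ingredient --- the near-diagonal expansion of $P_p$ with Bargmann--Fock leading term, together with the non-degeneracy of the resulting $1$-jet evaluation for nearby points. That last input is nowhere stated or proved in this paper (it is \cite[Theorem 4.18]{DLM06}, \cite[Theorem 4.2.9]{MM07}, established there under bounded geometry), so your proof outsources its hardest step and you would still have to check that this expansion localizes to $K$ under the weakened hypotheses \eqref{eq:0.6a}. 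What your approach buys is quantitative uniformity (one $p_0$ handling all pairs in $K\times K$, and no auxiliary singular metrics); what the paper's approach buys is economy: only diagonal asymptotics are needed, and the required vanishing at $x$ comes for free from the weight.
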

\begin{proof}
For $x\in X$, we construct as in \cite[\S 6.2]{MM07} 
the generalized Poincar\'e metric.
Consider the blow-up $\alpha:\widehat{X}\to X$ with center $x$ 
and denote by $D\subset\widehat{X}$ the exceptional divisor. 
By \cite[Proposition 2.1.11 (a)]{MM07},
there exists a smooth Hermitian metric $h_0$ on the line bundle 
$\cO(-D)$ whose curvature $R^{(\cO(-D),\,h_0)}$ 
is strictly positive along $D$, and vanishes outside a compact 
neighborhood of $D$.
We consider on $\widehat{X}$ the complete K\"ahler metric 
\begin{align}\label{eq:5.9a}
\Theta'=\alpha^*\om+\eta\sqrt{-1}
R^{(\cO(-D),\,h_0)}\,,\:\:0<\eta\ll 1\,.
\end{align}
The generalized Poincar\'e metric
on $X\setminus\{x\}=\widehat{X}\setminus D$ 
is defined by the Hermitian form
\begin{equation}\label{poin} 
\Theta_{\var_0}=\Theta ' +\varepsilon_0\sqrt{-1}\,
\db \partial\log\left((-\log\|\sigma\|^2)^2\right)\,, 
\quad \text{$0<\varepsilon_0\ll 1$ fixed}, 
\end{equation}   
where $\sigma$ is a holomorphic section 
of the associated  holomorphic line bundle $\cO(D)$ 
which vanish to first order on $D$, and 
$\|\sigma\|$ is the norm for a smooth Hermitian metric $\|\cdot\|$ 
on $\cO(D)$ such that $\|\sigma\|<1$.
By \cite[Lemma 6.2.1]{MM07} the generalized Poincar\'e metric is 
a complete K\"ahler metric of
finite volume in the neighborhood of $x$, whose Ricci curvature is bounded from below.
Set
\begin{align}\label{eq:5.10a}
h^L_\var:=h^L\,(-\log(\|\sigma\|^2))^\var\,e^{-\var\psi}, 
\quad 0<\varepsilon\ll 1\,,
\end{align}
where $\psi$ is a singular weight which vanishes outside 
a neighbourhood of $x$ and $\psi(z)=2n\log|z-x|$ in 
local coordinates near $x$.
We can apply \cite[Theorem 6.1.1]{MM07} to
$(X\setminus~\{x\},\Theta_{\var_0})$, $(L,h^L_{\varepsilon})$ 
and $E$, since condition \eqref{eq:0.6a} are satisfied for this data.
Let $y\in X$, $y\neq x$. Due to the asymptotics of 
the Bergman kernel on $X\setminus\{x\}$, there exists 
$p_0(x,y)$ such that for all $p\geqslant p_0(x,y)$ there is a section
\begin{align}\label{eq:5.11a}
\widehat{S}^p_{x,y}\in H^0_{(2)}(X\setminus\{x\},
L^p\otimes E,h^L_{\varepsilon},\Theta_{\var_0}^n)
\text{  with }  \widehat{S}^p_{x,y}(y)\neq 0.
\end{align}
Since the volume form 
$\Theta_{\var_0}^n$ dominates the Euclidean volume form and 
$e^{-\var\psi}$ is not integrable near $x$, the $L^2$ condition 
on $\widehat{S}^p_{x,y}$ implies that $\widehat{S}^p_{x,y}$ 
extends to $X$ with the value $0$ at $x$.
The extension ${S}^p_{x,y}$ is necessarily holomorphic by 
the Hartogs theorem and moreover an element of 
$H^0_{(2)}(X,L^p\otimes E)$. It satisfies 
${S}^p_{x,y}(x)=0$, ${S}^p_{x,y}(y)\neq0$, as desired.

In a similar manner one proves that $H^0_{(2)}(X,L^p\otimes E)$ 
separate points and give local coordinates on $K$.
\end{proof}
\begin{rem}\label{t4.11}
\textbf{(a)} Under the hypotheses of Theorem \ref{t4.3} 
(that is, bounded geometry) the argument above shows that 
the $p_0\in \N$ in Proposition \ref{p4.1} can be chosen 
such that for all $p\geqslant p_0$, $H^0_{(2)}(X,L^p\otimes E)$ 
separate points and give local coordinates on the whole $X$, 
i.\,e., points (i) and (ii) from Remark \ref{r4.1}\,(b) hold. 

\textbf{(b)} The separation of points in Proposition \ref{p4.1} 
follows also from a non-compact version of 
\cite[\!Theorem 1.8]{HsM11}, where the asymptotics of 
the Bergman kernel for space of sections of a positive line bundle
twisted with the Nadel multiplier sheaf of a singular metric
are obtained.
\end{rem}
\begin{thm}\label{t4.8}
Let $(X,J, \Theta)$ be a complete Hermitian manifold and let 
$\varphi$ be a smooth function on $X$.
Assume that $\varphi$ is bounded from below,
$\partial\db\varphi$, $J$, $g^{TX}$ have bounded geometry 
and there exists $\var>0$ such that 
$\sqrt{-1}\partial\db\varphi\geqslant\var\Theta$ on $X$.
Then $X$ is a Stein manifold.
\end{thm}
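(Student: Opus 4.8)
The plan is to deduce Theorem \ref{t4.8} from the holomorphic convexity and peak-section results established above, applied to a suitable \emph{trivial} line bundle. Take $L=\C$ to be the trivial holomorphic line bundle equipped with the Hermitian metric $h^L=e^{-\varphi}$, and let $E=\C$ carry the trivial metric, exactly as in the weighted $\C^n$ model recalled in the Introduction. The Chern curvature is then $R^L=\partial\db\varphi$, so the hypothesis $\sqrt{-1}\,\partial\db\varphi\geqslant\var\,\Theta$ gives $\sqrt{-1}R^L(\cdot,J\cdot)\geqslant\var\, g^{TX}(\cdot,\cdot)$, i.e.\ condition \eqref{eq:0.6} (after replacing $\var$ by $\var/2$, say), because $\Theta(\cdot,J\cdot)=g^{TX}(J\cdot,J\cdot)=g^{TX}(\cdot,\cdot)$. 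Moreover $R^L=\partial\db\varphi$, $R^E=0$, $J$ and $g^{TX}$ all have bounded geometry by assumption, and $(X,g^{TX})$ is complete. Hence the full hypotheses of Theorem \ref{t4.3} and of Remark \ref{t4.11}\,(a) hold for this data, and I would fix once and for all some $p\geqslant p_0$ furnished by those results.

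The next step is to transport the bundle-level conclusions to genuine global holomorphic functions on $X$. Since $L$ is trivial, a global holomorphic section of $L^p\otimes E=L^p$ is nothing but a holomorphic function $f\in\mO(X)$, and $H^0_{(2)}(X,L^p)$ consists of those $f$ with $\int_X|f|^2e^{-p\varphi}\,dv_X<\infty$. The crucial role of the hypothesis $\inf_X\varphi>-\infty$ appears here: writing $c=\inf_X\varphi$, the pointwise norms obey $|f|=|s|_{h^{L^p}}\,e^{p\varphi/2}\geqslant e^{pc/2}\,|s|_{h^{L^p}}$, so unboundedness of $|s|_{h^{L^p}}$ on a set forces unboundedness of $|f|$ there. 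Consequently the holomorphic convexity with respect to $(L^p,h^{L^p})$ provided by Theorem \ref{t4.3} upgrades to ordinary holomorphic convexity of $X$ with respect to the trivial bundle (Definition \ref{t4.2a}): every infinite subset without limit points carries a global holomorphic function of unbounded modulus. Likewise, a section vanishing at $x$ and nonzero at $y$ is a holomorphic function separating $x$ from $y$, so Remark \ref{t4.11}\,(a), which establishes item (i) of Remark \ref{r4.1}\,(b), shows that $\mO(X)$ separates the points of $X$.

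It remains to check that $\mO(X)$ gives local coordinates everywhere. At a given $x\in X$, Remark \ref{t4.11}\,(a), via item (ii) of Remark \ref{r4.1}\,(b), provides sections $S_0,\dots,S_n\in H^0_{(2)}(X,L^p)$ with $S_0(x)\neq0$ such that $S_1/S_0,\dots,S_n/S_0$ form a holomorphic coordinate system near $x$. Regarding the $S_i$ as global functions $f_i\in\mO(X)$ and using $d(f_i/f_0)=f_0^{-1}df_i-f_if_0^{-2}df_0$, the differentials $df_0(x),\dots,df_n(x)$ span the holomorphic cotangent space $T^{*(1,0)}_{x}X$; thus finitely many global holomorphic functions give local coordinates at each point. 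Altogether $X$ is holomorphically convex, holomorphically separable and holomorphically spreadable, which is exactly the classical characterization of a Stein manifold (Grauert), and the theorem follows.

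The only genuinely delicate point in this argument is the passage from statements about $L^2$-sections of $(L^p,e^{-p\varphi})$ to statements about the ambient global holomorphic functions; this rests squarely on the lower bound for $\varphi$, which is precisely what prevents a section of large bundle-norm from corresponding to a bounded function. Everything else is a direct specialization of Theorem \ref{t4.3} and Remark \ref{t4.11}\,(a) to the trivial line bundle.
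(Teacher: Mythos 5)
Your proposal is correct and follows essentially the same route as the paper: equip the trivial line bundle with the weight metric $e^{-\varphi}$ (the paper uses $e^{-2\varphi}$, an immaterial normalization), invoke Theorem \ref{t4.3} for holomorphic convexity, use the lower bound on $\varphi$ to pass from bundle sections to bounded-weight holomorphic functions, and then quote Remark \ref{r4.1}\,(b)/Remark \ref{t4.11}\,(a) for separation of points and local coordinates before applying Grauert's characterization of Stein manifolds. Your write-up merely makes explicit the passage from $L^2$-sections to global functions, which the paper leaves implicit.
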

\begin{proof}
We apply Theorem \ref{t4.3} for the trivial line bundle 
$L=X\times\C$ endowed with the metric $h^L$ defined by
$|1|^2_{h^L}=e^{-2\varphi}$. Then 
\begin{align}\label{eq:5.11}
R^L=2\partial\db\varphi\,.
\end{align}
Thus $X$ is holomorphically convex with respect to 
$(L,e^{-2p\varphi})$ for $p$ sufficiently large. 
Since $\varphi$ is bounded below this implies that $X$
is holomorphically convex with respect to the trivial line bundle 
endowed with the trivial metric. Moreover, 
Remark \ref{r4.1}\,(b), or Proposition \ref{p4.1}, 
shows that global holomorphic functions on $X$ separate points 
and give local coordinates on $X$. Hence $X$ is Stein.
\end{proof}
\begin{cor}\label{t4.3a} 
Let $(X,J,\Theta)$ be a compact Hermitian manifold and
$(L,h^L)$ be a positive line bundle  over $X$.
Then there exists $p_0\in \N$ such that for all
Galois covering $\pi:\wi{X}\to X$, for all $p\geqslant p_0$, 
and all Hermitian metrics $h_{p}$\,, $h^E$ on $L^p$, $E$, $\wi{X}$
is holomorphically convex with respect to the bundle 
$\wi{L}^p\otimes\wi{E}$.
\end{cor}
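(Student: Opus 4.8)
The plan is to reduce the statement to Theorem \ref{t4.3} applied on the covering, exactly as Corollary \ref{t4.1a} reduces to Theorem \ref{t4.1}. Since $L$ is positive, I would first fix once and for all a smooth Hermitian metric $h^L$ on $L$ whose curvature satisfies $\sqrt{-1}R^L>0$; as $X$ is compact there is then $\varepsilon>0$ such that \eqref{eq:0.6} holds for $(L,h^L)$ and the given $g^{TX}=\Theta(\cdot,J\cdot)$. Pulling everything back along $\pi$, the covering $(\wi{X},g^{T\wi{X}})$ with $g^{T\wi{X}}=\pi^*g^{TX}$ is complete, and $R^{\wi{L}},R^{\wi{E}},\wi{J},g^{T\wi{X}}$ have bounded geometry; moreover \eqref{eq:0.6} continues to hold on $\wi{X}$ with the \emph{same} $\varepsilon$, because $\pi$ is a local isometry and all objects are pullbacks from the compact base.

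Next I would apply Theorem \ref{t4.3} to $(\wi{X},\wi{J},g^{T\wi{X}})$ with $(\wi{L},\pi^*h^L)$ and $(\wi{E},\pi^*h^E)$, obtaining a threshold $p_0$ such that for every $p\geqslant p_0$ and every sequence $\{x_i\}\subset\wi{X}$ without limit point there is a holomorphic section of $\wi{L}^p\otimes\wi{E}$ whose pointwise norm (measured with $(\pi^*h^L)^p$ and $\pi^*h^E$) is unbounded along the sequence. The crucial point is that this $p_0$ may be chosen independently of the covering $\pi$. This follows by tracing the dependence of the constants through the proof of Theorem \ref{t4.3}: the construction in Lemma \ref{l4.5} uses only the invertibility of $P_p(x,x)$ from Theorem \ref{t4.1} (hence the uniform diagonal expansion of Theorem \ref{t4.0}) together with the exponential off-diagonal decay \eqref{eq:0.7}, whose constants $\boldsymbol{p}_0,\boldsymbol{c}$ are given explicitly in \eqref{bk3.30} in terms of $\mu_0$, $C_L$ and $\boldsymbol{a}$. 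All of these quantities are controlled purely by the bounded-geometry bounds and by $\varepsilon$, which are identical for every covering of the fixed compact base $X$: the curvature bounds are unchanged under pullback, and the injectivity radius of $\wi{X}$ is at least that of $X$.

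Finally I would remove the restriction to the special metrics $(\pi^*h^L)^p$ and $\pi^*h^E$. Over the compact manifold $X$ any Hermitian metric $h_p$ on $L^p$ (resp.\ $h^E$ on $E$) is uniformly comparable to $(h^L)^p$ (resp.\ to the fixed $h^E$), i.e.\ bounded above and below by constant multiples of it; pulling back preserves these comparisons \emph{uniformly} on all of $\wi{X}$. Hence a section whose norm is unbounded along $\{x_i\}$ for $(\pi^*h^L)^p$ also has unbounded norm for $\wi{h}_p=\pi^*h_p$, which is precisely what Definition \ref{t4.2a} requires; holomorphicity is of course metric-independent. This yields holomorphic convexity of $\wi{X}$ with respect to $\wi{L}^p\otimes\wi{E}$ for all $p\geqslant p_0$ and all choices of $h_p,h^E$. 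The main obstacle is the uniformity of $p_0$ over all coverings claimed in the second step; as in Corollary \ref{t4.1a}, this rests entirely on the observation that the bounded-geometry data and the spectral-gap constant $\varepsilon$ pass unchanged to every Galois covering, so that no new estimate beyond those already established is needed.
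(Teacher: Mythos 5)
Your proposal is correct and follows essentially the same route as the paper: apply Theorem \ref{t4.3} to the covering equipped with the pullback of a fixed positively curved metric $(h^L)^p$ (the covering inherits completeness, bounded geometry and the positivity constant $\varepsilon$ from the compact base, so $p_0$ is covering-independent), and then observe that holomorphic convexity is unaffected by switching to arbitrary metrics $h_p$, $h^E$ since these are uniformly comparable to the pulled-back ones over the compact base. The paper states this more tersely, but the content is identical.
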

\begin{proof}
The conclusion follows immediately from Theorem \ref{t4.3} for 
metrics $h_{p}$ of the form $(h^{L})^p$, where 
$h^L$ is a positively curved metric on $L$. 
Observe finally that the convexity
is independent of the Hermitian metrics $h_{p}$, $h^E$ 
chosen on $L^p$, $E$ over the compact manifold $X$.
\end{proof}
\begin{rem}\label{t4.14}
We can prove Corollary \ref{t4.3a} also without the use of
the off-diagonal decay of the Bergman kernel. What is actually 
needed is only Corollary \ref{t4.1a} which uses only the diagonal 
expansion of the Bergman kernel. In order to carry out
the proof we show that Lemma \ref{l4.5} follows from 
Corollary \ref{t4.1a}. By this Corollary, for any $x\in\wi{X}$ 
there exists
$S_x\in H^0_{(2)}(\wi{X},\wi{L}^p\otimes\wi{E})$ such that 
$|S_x(x)|\geqslant M$. 
Since $S_x\in L^2(\wi{X},\wi{L}^p\otimes\wi{E})$, there exists 
a compact set $A(S_x,\var)\subset\wi{X}$ such that
\begin{align}\label{eq:5.14}
|S_x|\leqslant\var  \text{  on }  \wi{X}\setminus A(S_x,\var).
\end{align}
Let $F\subset\wi{X}$ be a compact fundamental set. 
Using what has been said, there exists a compact set 
$F(\var,M)\supset F$ and sections 
$S_1,\ldots,S_m\in H^0_{(2)}(\wi{X},\wi{L}^p\otimes\wi{E})$ 
such that 
\begin{align}\label{eq:5.15}
\max_{1\leqslant i\leqslant m}|S_i(x)|\geqslant M
\text{  for all }  x\in F  \text{ and }
\max_{1\leqslant i\leqslant m}|S_i(x)|\leqslant\var
\text{  for all }   x\in\wi{X}\setminus F(\var,M). 
\end{align}
Let $K\subset\wi{X}$ 
be a compact set and let $\Gamma$ be the group of 
deck transformations of $\pi:\wi{X}\to X$. Define 
\begin{align}\label{eq:5.16}
K(\var,M)=\bigcup\{\gamma F:\gamma\in\Gamma,\,
K\cap\gamma F(\var,M)\neq\emptyset\}\,.
\end{align}
Consider now $x\in\wi{X}\setminus K(\var,M)$. 
Then there is $\gamma\in\Gamma$ such that $\gamma^{-1}x\in F$.
It follows that $K\cap\gamma F(\var,M)=\emptyset$ so there is 
$S_i$ such that $|\gamma S_i(x)|\geqslant M$ and 
$|\gamma S_i|\leqslant\var$ on $K$. 
\end{rem}


\def\cprime{$'$} \def\cprime{$'$} \def\cprime{$'$}
\providecommand{\bysame}{\leavevmode\hbox to3em{\hrulefill}\thinspace}
\providecommand{\MR}{\relax\ifhmode\unskip\space\fi MR }
\providecommand{\MRhref}[2]{%
  \href{http://www.ams.org/mathscinet-getitem?mr=#1}{#2}
}
\providecommand{\href}[2]{#2}

    
\end{document}